\newcommand{\nc}{\newcommand}
\nc{\gramU}{\textbf{\textsl{U}}}
\nc{\fg}{\mathfrak{f} }     \nc{\vg}{\mathfrak{v} }       \nc{\wg}{\mathfrak{w} }
\nc{\zg}{\mathfrak{z} }     \nc{\ngo}{\mathfrak{n} }      \nc{\kg}{\mathfrak{k} }
\nc{\ngoc}{\widehat{\mathfrak{n}} }
\nc{\mg}{\mathfrak{m} }     \nc{\bg}{\mathfrak{b} }       \nc{\ggo}{\mathfrak{g} }
\nc{\ggoc}{\widehat{\mathfrak{g}} }
\nc{\sog}{\mathfrak{so} }
\nc{\sug}{\mathfrak{su} }   \nc{\spg}{\mathfrak{sp} }     \nc{\slg}{\mathfrak{sl} }
\nc{\glr}{\mathfrak{gl}_{n}(\RR)}   \nc{\cg}{\mathfrak{c} }       \nc{\rg}{\mathfrak{r} }
\nc{\g}{\mathfrak{gl}_n(\RR)}
\nc{\glg}{\mathfrak{gl}}
\nc{\hg}{\mathfrak{h} }     \nc{\tg}{\mathfrak{t} }       \nc{\ug}{\mathfrak{u} }
\nc{\dg}{\mathfrak{d} }     \nc{\ag}{\mathfrak{a} }       \nc{\pg}{\mathfrak{p} }
\nc{\agc}{\widehat{\mathfrak{a}} }
\nc{\sg}{\mathfrak{s} }     \nc{\affg}{\mathfrak{aff} }
\nc{\ggob}{\overline{\mathfrak{g}} }
\nc{\pca}{\mathcal{P}}       \nc{\nca}{\mathcal{N}}       \nc{\lca}{\mathcal{L}}
\nc{\oca}{\mathcal{O}}       \nc{\mca}{\mathcal{M}}       \nc{\tca}{\mathcal{T}}
\nc{\aca}{\mathcal{A}}       \nc{\cca}{\mathcal{C}}       \nc{\gca}{\mathcal{G}}
\nc{\sca}{\mathcal{S}}       \nc{\hca}{\mathcal{H}}       \nc{\bca}{\mathcal{B}}
\nc{\dca}{\mathcal{D}}       \nc{\rca}{\mathcal{R}}
\nc{\val}{\operatorname{val}}
\nc{\vp}{\varphi}
\nc{\ddt}{\tfrac{{\rm d}}{{\rm d}t}}
\nc{\dpar}{\tfrac{\partial}{\partial t}}
\nc{\im}{\sqrt{-1}}        
\newcommand{\fnn}[3]{
\begin{array}{rccl}
{#1}:&\hspace{-2mm}{#2}&\hspace{-2mm}\longrightarrow&\hspace{-2mm}{#3}\\
\end{array}}
\newcommand{\comillas}[1]{\textquotedblleft{#1}\textquotedblright}
\nc{\SO}{\mathrm{SO}}           \nc{\Spe}{\mathrm{Sp}}          \nc{\Sl}{\mathrm{SL}}
\nc{\SU}{\mathrm{SU}}           \nc{\Or}{\mathrm{O}(n)}            \nc{\U}{\mathrm{U}}
\nc{\Se}{\mathrm{S}}            \nc{\Cl}{\mathrm{Cl}}           \nc{\Spein}{\mathrm{Spin}}
\nc{\Pin}{\mathrm{Pin}}
\nc{\Glr}{\mathrm{GL}_n(\RR)}   \nc{\Glc}{\mathrm{GL}_n(\CC)}   \nc{\Glv}{\mathrm{GL}(V)}    \nc{\Glk}{\mathrm{GL}_n(\fk)}   \nc{\Gl}{\mathrm{GL}}
\nc{\GrpG}{\mathrm{G}}          \nc{\GrpH}{\mathrm{H}}          \nc{\GrpA}{\mathrm{A}}       \nc{\GrpT}{\mathrm{T}}          \nc{\GrpK}{\mathrm{K}}
\nc{\GrpGc}{\widehat{\mathrm{G}}}
\nc{\GrpN}{\mathrm{N}}
\nc{\RR}{{\Bbb R}} \nc{\HH}{{\Bbb H}} \nc{\CC}{{\Bbb C}} \nc{\ZZ}{{\Bbb Z}}
\nc{\FF}{{\Bbb F}} \nc{\NN}{{\Bbb N}} \nc{\QQ}{{\Bbb Q}} \nc{\PP}{{\Bbb P}}
\nc{\euler}{{\rm e}}
\nc{\vs}{\vspace{.2cm}} \nc{\vsp}{\vspace{1cm}}
\nc{\ip}{\langle \cdot , \cdot \rangle}
\nc{\ipd}{\langle \hspace{-0.5mm}\langle \cdot , \cdot \rangle\hspace{-0.5mm}\rangle}
\nc{\ippd}{( \hspace{-0.5mm} ( \cdot , \cdot ) \hspace{-0.5mm} )}
\nc{\ipp}{(      \cdot , \cdot       )}
\nc{\la}{\langle} \nc{\ra}{\rangle}
\nc{\ortsum}{ \mbox{\tiny $\displaystyle \bigoplus^{\perp}$}}
\nc{\dirsum}{ \mbox{\tiny $\displaystyle \bigoplus $}}
\nc{\ortres}{ \mbox{\tiny $\displaystyle \bigominus^{\perp}$}}
\newcommand{\ipa}[2]{\langle {#1} , {#2} \rangle}
\newcommand{\ipda}[2]{\langle \hspace{-0.5mm}\langle {#1} , {#2} \rangle\hspace{-0.5mm}\rangle}
\newcommand{\ippa}[2]{(#1, #2)}
\nc{\unm}{\tfrac{1}{2}}\nc{\unc}{\tfrac{1}{4}} \nc{\und}{\tfrac{1}{16}}
\nc{\no}{\vs\noindent}
\nc{\lamn}{\Lambda^2(\RR^n)^*\otimes\RR^n} \nc{\lamp}{\Lambda^2\pg^*\otimes\pg}
\nc{\lamg}{\Lambda^2\ggo^*\otimes\ggo} \nc{\lamngo}{\Lambda^2\ngo^*\otimes\ngo}
\nc{\lamnk}{\Lambda^2(\fk^n)^*\otimes \fk^n} \nc{\lamnkt}{\Lambda^3(\fk^n)^*\otimes \fk^n}
\nc{\tangz}{{\rm T}^{\rm Zar}}
\nc{\mum}{/\!\!/} \nc{\kir}{/\!\!/\!\!/}
\nc{\lievark}{\mathfrak{L}_n(\fk)}         \nc{\lievarc}{\mathfrak{L}_n(\CC)}        \nc{\lievarr}{\mathfrak{L}_n(\RR)}
\nc{\solvvark}{\mathfrak{R}_n(\fk)}        \nc{\solvvarc}{\mathfrak{R}_n(\CC)}       \nc{\solvvarr}{\mathfrak{R}_n(\RR)}
\nc{\nilvark}{\mathfrak{N}_n(\fk)}         \nc{\nilvarc}{\mathfrak{N}_n(\CC)}        \nc{\nilvarr}{\mathfrak{N}_n(\RR)}
\nc{\cirre}{\textrm{C}}
\nc{\fk}{\mathrm{k}}
\nc{\Ri}{\tfrac{4\Ric_{\mu}}{||\mu||^2}}
\nc{\ds}{\displaystyle}
\nc{\lb}{[\cdot,\cdot]}
\nc{\Hess}{\operatorname{Hess}}
\nc{\diag}{\operatorname{Diag}}   \nc{\Id}{\operatorname{Id}}
\nc{\trans}{\mbox{{\tiny$\operatorname{T}$}}}                         \nc{\Proj}{\operatorname{Proj}}
\nc{\Proy}{\operatorname{Proy}}
\nc{\ad}{\operatorname{ad}}       \nc{\Ad}{\operatorname{Ad}}        
\nc{\rank}{\operatorname{rank}}   \nc{\codim}{\operatorname{codim}}  
\nc{\Irr}{\operatorname{Irr}}     \nc{\End}{\operatorname{End}}
\nc{\Aut}{\operatorname{Aut}}     \nc{\Inn}{\operatorname{Inn}}
\nc{\lRad}{\operatorname{Rad}}
\nc{\Der}{\operatorname{Der}}     \nc{\Ker}{\operatorname{Ker}}
\nc{\Iso}{\operatorname{I}}       \nc{\Diff}{\operatorname{Diff}}
\nc{\Lie}{\operatorname{Lie}}     \nc{\tr}{\operatorname{tr}}
\nc{\dif}{\operatorname{d}}       \nc{\e}{\operatorname{e}}
\nc{\sen}{\operatorname{sen}}     \nc{\tang}{\operatorname{T}}
\nc{\modu}{\operatorname{mod}}
\nc{\Riem}{\operatorname{Rm}}     \nc{\Ric}{\operatorname{Ric}}
\nc{\chP}{\operatorname{P}}       \nc{\chPform}{\operatorname{p}}
\nc{\sym}{\operatorname{sym}}     \nc{\symac}{\operatorname{sym^{ac}}}   \nc{\symc}{\operatorname{sym^{c}}}
\nc{\scalar}{\operatorname{sc}}
\nc{\grad}{\operatorname{grad}}
\nc{\ricci}{\operatorname{ric}}   \nc{\nr}{\operatorname{nr}}            \nc{\riccic}{\operatorname{ric^{c}}}
\nc{\riccig}{\operatorname{ric^{\gamma}}}
\nc{\Rin}{\operatorname{M}}
\nc{\Kill}{\operatorname{B}}
\nc{\Le}{\operatorname{L}}
\nc{\level}{\operatorname{level}} \nc{\rad}{\operatorname{r}}
\nc{\abel}{\operatorname{ab}}
\nc{\CH}{\operatorname{CH}}        \nc{\mcc}{\operatorname{mcc}}     \nc{\inte}{\operatorname{int}}         \nc{\aff}{\operatorname{Aff}}
\nc{\CaC}{\operatorname{CC}}        \nc{\ccm}{\operatorname{ccm}}
\nc{\Adj}{\operatorname{Adj}}
\nc{\Order}{\operatorname{O}} \nc{\Ricg}{\operatorname{Ric^{\gamma}}}
\nc{\Hom}{\operatorname{Hom}}
\nc{\sign}{\operatorname{sign}}
\nc{\spanv}{\operatorname{span}}
\nc{\xp}{\operatorname{xp}}   \nc{\xt}{\operatorname{xt}}
\nc{\IC}{\operatorname{IC}}   \nc{\OC}{\operatorname{OC}}
\nc{\signo}{\operatorname{sgn}}
\nc{\rhov}{\operatorname{\rho_{v}}}
\nc{\mm}{m}
\nc{\mmt}{\widetilde{m}}
\nc{\F}{\operatorname{F}}
\newtheorem{theoremABC}{Theorem}
\theoremstyle{plain}
\newtheorem{theorem}{Theorem}[section]
\newtheorem{proposition}[theorem]{Proposition}
\theoremstyle{definition}
\newtheorem{definition}[theorem]{Definition}
\newtheorem{notation}[theorem]{Notation}
\theoremstyle{remark}
\newtheorem{remark}[theorem]{Remark}
\newtheorem{example}[theorem]{Example}
\numberwithin{equation}{section}
\begin{document}

\title[Soliton almost K\"{a}hler structures on 6-dimensional nilmanifolds ]{Soliton almost K\"{a}hler structures on 6-dimensional nilmanifolds for the symplectic curvature flow}
\author{EDISON ALBERTO FERN\'ANDEZ-CULMA}
\address{Current affiliation: CIEM, FaMAF, Universidad Nacional de C\'ordoba, \newline \indent Ciudad Universitaria, \newline \indent (5000) C\'ordoba, \newline \indent Argentina}
\email{efernandez@famaf.unc.edu.ar}
\thanks{Fully supported by a CONICET Postdoctoral Fellowship (Argentina)}
\subjclass[2010]{Primary  57N16 Secondary  22E25; 22E45}
\keywords{Symplectic curvature flow, Self-similar solutions, Almost K\"{a}hler structures, Complex structures, Hypercomplex structures, Nilmanifolds, Nilpotent Lie groups, Convexity of the moment map, Nice basis.}

\begin{abstract}
The aim of this paper is to study self-similar solutions to the symplectic cuvature flow on $6$-dimensional nilmanifolds.
For this purpose, we focus our attention in the family of symplectic Two- and Three-step nilpotent Lie algebras admitting
a \textit{minimal compatible metric} and we give a complete classification of these algebras together with their respective metric.
Such classification is given by using our generalization of Nikolayevsky's nice basis criterium,
which will be repeated here in the context of canonical compatible metrics for geometric structures on nilmanifolds,
for the convenience of the reader.

By computing the Chern-Ricci operator $\chP$ in each case, we show that the above distinguished metrics define
a soliton almost K\"{a}hler structure.

Many illustrative examples are carefully developed.

\end{abstract}

\maketitle

\section{Introduction}

Let $(g,J,\omega)$ be an almost K\"{a}hler structure on a manifold $M^{2n}$. Let us denote by $\chPform$ its Chern-Ricci form and
by $\ricci$ the usual Ricci tensor of the Riemannian manifold $(M^{2n},g)$. The \textit{symplectic curvature flow} on a
compact almost K\"{a}hler manifold $(M^{2n},g_{0},J_{0},\omega_{0})$ is given by the system of evolution equations
\begin{eqnarray}\label{SCF}
\left\{\begin{array}{ll}
  \dpar \omega = -2\chPform;                            &\mbox {with } \omega(0)=\omega_{0},\\
  \dpar g= -2(\chPform^{c}(\cdot,J\cdot)-\ricci^{ac} ); &\mbox{with } g(0)=g_{0}.
\end{array}\right.
\end{eqnarray}
Here, $\chPform^{c}$ is the complexified component of $\chPform$ (also called $J$-invariant part of $\chPform$) and
$\ricci^{ac}$ denotes the anti-complexified part of $\ricci$ (also known as anti-$J$-invariant part of $\ricci$).

This geometric flow was recently introduced by Jeffrey Streets and Gang Tian in \cite{STREETS1}, where it is proved the short-time existence and
uniqueness for this flow. The solution to the Equation (\ref{SCF}) preserves the almost K\"{a}hler structure, and if the initial almost K\"{a}hler structure is in fact K\"{a}hler, then such solution is a solution to the K\"{a}hler Ricci flow.

Let $\GrpG$ be a simply connected Lie group admitting a left-invariant almost K\"{a}hler structure $(g_{0},J_{0},\omega_{0})$. Any left-invariant almost K\"{a}hler structure on $\GrpG$ is determined by a inner product $\ip$ on $\ggo$ and a non-degenerate skew-symmetric bilinear form $\omega$ on $\ggo$, here $\ggo=\Lie(\GrpG)$ (the Lie algebra of $\GrpG$). One can consider the symplectic curvature flow (SCF for short) on $\GrpG$, where (\ref{SCF}) becomes a system of ordinary differential equations. The almost K\"{a}hler structure $(g_{0},J_{0},\omega_{0})$ is called a \textit{soliton} (\cite[Section 7]{LAURET7}), if the solution to the SCF starting at $(g_{0},J_{0},\omega_{0})$ is (algebraically) self-similar, it is to say,
the solution has the form:
\begin{eqnarray}\label{solitondef1}
\left\{\begin{array}{ll}
\omega_{t} = c(t)\omega_{0}(\phi_{t}\cdot,\phi_{t}\cdot)\\
g_{t} = c(t) g_{0}(\phi_{t}\cdot , \phi_{t} \cdot)
\end{array}\right.
\end{eqnarray}
for some $c(t) \in \RR_{>0}$, $\phi_{t} \in \Aut(\ggo)$, both differentiable at $t$, with $c(0)=1$, $\phi_{0}=\Id$ and $\phi^{\prime}_{0}= D \in \Der(\ggo)$.

By following results given in \cite{LAURET7}, our aim in this note is to
study soliton K\"{a}hler structures on $6$-dimensional nilmanifolds. In some cases, such structures are determined by \textit{minimal compatible metrics} on symplectic nilpotent Lie algebras. This is the case of symplectic two-step nilpotent Lie algebras which are Chern-Ricci flat (it follows from results of Luigi Vezzoni in \cite{VEZZONI1} or \cite[Proposition 2]{POOK1}). In this way, we give a complete classification of minimal compatible metrics with symplectic three-step and two-step nilpotent Lie algebras and prove that:

\begin{theoremABC}\label{th1}
All symplectic two-step Lie algebras of dimension $6$ admit a minimal compatible metric, and, in consequence, admit a soliton almost K\"{a}hler structure.
\end{theoremABC}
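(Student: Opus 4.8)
The plan is to proceed by classification. The first step is to list, up to isomorphism, all $6$-dimensional two-step nilpotent Lie algebras $\ggo$ that admit a symplectic form; this is a finite list, extracted from the known classification of $6$-dimensional nilpotent Lie algebras together with the constraint that $\Lambda^2\ggo^*$ contain a closed non-degenerate element (equivalently $d\omega=0$ and $\omega^3\neq 0$). For each $\ggo$ on this list I would fix a convenient symplectic $\omega$, reducing the problem to finitely many explicit symplectic Lie algebras $(\ggo,\omega)$ described by structure constants.

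The second step is, for each pair $(\ggo,\omega)$, to exhibit a \emph{nice basis} in the sense of the generalized Nikolayevsky criterion recalled above, adapted to $\omega$: a basis in which the Lie brackets are scalar multiples of single basis vectors, distinct admissible pairs produce distinct vectors, and $\omega$ is in a Darboux-type normal form pairing those vectors. Once such a basis is available, the existence of a minimal compatible metric reduces to a finite-dimensional convexity problem: one forms the associated Gram-type matrix $\gramU$ encoding which structure constants interact, and checks that $\gramU\, x = \lambda\,\mathbf{1}$ admits a solution $x$ with all positive entries (equivalently, that $[1,\dots,1]^{\trans}$ lies in the relative interior of the relevant cone). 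The criterion then yields a minimal compatible metric, diagonal in the nice basis, with weights read off from $x$.

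The third step is a finite case-by-case verification that the combinatorial condition of Step 2 holds for every algebra on the list; for any algebra whose nice basis does not obviously satisfy it — or which fails to admit a nice basis compatible with a chosen symplectic form — one argues directly, parametrizing compatible metrics by positive diagonal (or block) data, writing down the Ricci operator explicitly, and solving $\Ric = c\,\Id + D$ with $D\in\Der(\ggo)$. The second assertion of the theorem is then immediate: by Vezzoni's result (equivalently \cite[Proposition 2]{POOK1}) every symplectic two-step nilpotent Lie algebra of dimension $6$ is Chern-Ricci flat, so $\chPform\equiv 0$ and the system \eqref{SCF} degenerates to a bracket-normalized Ricci flow on $g$; combining this with \cite{LAURET7}, a minimal compatible metric is automatically a soliton almost K\"ahler structure, with self-similar solution \eqref{solitondef1} given (up to scaling) by $\omega_t\equiv\omega_0$ and $\phi_t=\exp(tD)$.

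The main obstacle I expect is the completeness of Step 3. To be certain that \emph{all} symplectic two-step $6$-dimensional nilpotent Lie algebras are covered, one must control the choice of $\omega$ carefully: on a fixed $\ggo$ the symplectic forms need not form a single $\Aut(\ggo)$-orbit, and only some of them may admit a nice adapted basis, so a wrong choice could make the convexity criterion fail even though a minimal compatible metric exists for another choice. Disposing of the exceptional cases by an explicit, coordinate-dependent computation of the Ricci operator — rather than the clean nice-basis machinery — is where the real effort lies; the remainder is bookkeeping over a short list.
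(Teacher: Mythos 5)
Your overall strategy --- reduce to a finite classification, put each case into a nice basis adapted to $\omega_{cn}$, apply the Gram-matrix criterion of Theorem \ref{nicegeneralizado}, and then conclude the soliton property from Chern--Ricci flatness of two-step algebras together with Proposition \ref{sufsoliton} --- is exactly the route the paper takes. However, there is one genuine gap in your Step 1. A minimal \emph{compatible} metric is a notion attached to the pair $(\ngo,\omega)$, not to the algebra $\ngo$ alone, and the theorem quantifies over all such pairs. Your plan to ``fix a convenient symplectic $\omega$'' on each algebra therefore proves the statement only for those particular choices. The paper instead works from the Khakimdjanov--Goze--Medina classification of symplectic structures up to equivalence on each $6$-dimensional two-step nilpotent Lie algebra, which includes continuous families of pairwise non-equivalent forms (e.g.\ on the free two-step algebra $\ngo_{18}$ one must treat the two curves $\omega_{1}(s)$, $\omega_{2}(t)$ and the isolated structure $\omega_{3}$ separately), and verifies the criterion for every equivalence class. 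This is not a harmless omission: the paper's three-step analysis of $\ngo_{11}$ shows that on a fixed nilpotent Lie algebra one symplectic structure ($\omega_{1}(\lambda)$, $\lambda\neq 0$) can fail to admit a minimal compatible metric while another ($\omega_{1}(0)$) admits one, so the existence question genuinely depends on which $\omega$ is chosen. You gesture at this issue in your final paragraph, but you frame it as a difficulty in finding a nice basis rather than as the need to exhaust all equivalence classes, and your Step 1 as written does not do so.

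Two smaller points. First, the notion of $\gamma$-nice basis used here (Definition \ref{gamanice}) is the condition that $\Ric^{\omega_{cn}}_{a\cdot\ip}\in\ag_{\omega_{cn}}$ for all $a\in\exp(\ag_{\omega_{cn}})$; this is weaker than the classical Nikolayevsky requirement you describe (brackets being scalar multiples of single basis vectors), and indeed several of the paper's normal forms have brackets with two nonzero components, with the weights being the \emph{projections} of $E_{k,k}-E_{i,i}-E_{j,j}$ onto $\ag_{\omega_{cn}}$. Second, your fallback of solving $\Ric=c\Id+D$ directly is not needed in the two-step case --- the paper finds that every class admits a $\omega_{cn}$-nice presentation and the Gram criterion succeeds in all cases --- but it is not wrong; note only that the relevant equation is $\Ric^{ac}=c\Id+D$, since minimality is characterized via the invariant (anti-complexified) Ricci operator, not the full Ricci operator.
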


\begin{theoremABC}\label{th2}
Every minimal compatible metric on a symplectic three-step nilpotent Lie algebra of dimension 6 defines a soliton almost K\"{a}hler structure.
\end{theoremABC}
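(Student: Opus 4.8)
The plan is to deduce Theorem~\ref{th2} from the algebraic description of self-similar solutions of the SCF on nilpotent Lie groups, together with the explicit classification of minimal compatible metrics on symplectic three-step nilpotent Lie algebras of dimension $6$ established above. Following \cite{LAURET7}, a left-invariant almost K\"ahler structure $(g,J,\omega)$ on a simply connected nilpotent Lie group $\GrpG$ is an algebraic SCF soliton if and only if its Chern-Ricci operator satisfies
\begin{equation*}
\chP \;=\; c\,\Id + D \qquad \text{for some } c\in\RR,\ D\in\Der(\ggo);
\end{equation*}
the easy direction follows by differentiating the self-similar ansatz~(\ref{solitondef1}) at $t=0$ and matching operators, while the converse uses that $\chP$ is $g$-symmetric and that $\chPform$ and the anti-$J$-invariant Ricci term $\ricci^{ac}$ are both determined algebraically by the bracket $\mu$ and $J$, so that the scalar part $c\,\Id$ and the derivation $D$ simultaneously account for the evolution of $\omega$ and of $g$. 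Thus the whole theorem reduces to verifying, for each algebra on the classification list, that $\chP$ has the displayed form.

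Concretely, I would proceed case by case. For a given symplectic three-step nilpotent Lie algebra $\ggo$ of dimension $6$ from the list, I would take the distinguished minimal compatible metric constructed earlier, written in the associated nice basis, so that $\mu$, $J$, $\omega$ and $g$ are all explicit, and then compute $\chP=\chP_{(g,J,\omega)}$ from the general formula for the Chern-Ricci operator of a left-invariant almost K\"ahler structure on a Lie group in terms of $\mu$ and $J$. Minimality of the metric --- which, by our generalization of Nikolayevsky's nice-basis criterion, says exactly that the relevant moment-map operator $\Ric_\mu$ is a critical point and therefore lies in $\RR\,\Id+\Der(\ggo)$ --- forces $\chP$ to be diagonal in the nice basis; from its eigenvalues I would read off the scalar $c$ and set $D:=\chP-c\,\Id$. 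Since $D$ is diagonal, the derivation identity $D\mu(X,Y)=\mu(DX,Y)+\mu(X,DY)$ reduces to one scalar equation per nonzero bracket, and I would check that it holds in each case --- equivalently, that $\chP$ coincides with $c\,\Id$ plus the canonical semisimple derivation governing minimality.

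The main obstacle is precisely this last verification. There is no shortcut around evaluating the Chern-Ricci operator separately for each algebra on the list, and the delicate point is conceptual: minimality was imposed as a property of the Ricci-type moment map $\Ric_\mu$ of the metric Lie algebra, whereas the soliton condition concerns $\chP$; one must confirm in each case that the canonical derivation appearing in the description of the minimal metric is the same derivation that shows up in $\chP$. Once this is checked throughout the list we obtain $\chP=c\,\Id+D$ with $D\in\Der(\ggo)$ in every case, which --- together with the Chern-Ricci flat two-step situation covered by Theorem~\ref{th1} --- proves that every minimal compatible metric in question defines a soliton almost K\"ahler structure.
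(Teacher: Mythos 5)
Your overall strategy coincides with the paper's: take the classification of minimal compatible metrics obtained via the nice-basis criterion, compute the Chern--Ricci operator of each minimal metric case by case, show it has the form $c\,\Id+D$ with $D\in\Der(\ggo)$, and combine this with minimality to conclude via Lauret's sufficient condition. However, two of your key intermediate claims are wrong. First, the asserted equivalence \comillas{soliton $\Leftrightarrow$ $\chP=c\,\Id+D$} is not the characterization in \cite[Lemma 7.1]{LAURET7}: the soliton condition (Definition \ref{solitondef2}) is a system of \emph{two} equations involving both $\chP$ and $\chP^{c}+\Ric^{ac}$, and $\chP=c\,\Id+D$ by itself is neither necessary nor sufficient. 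What actually closes the argument is Proposition \ref{sufsoliton}, condition (\ref{cond2}): one needs $\chP=c_{1}\Id+D_{1}$ \emph{and} $\Ric^{ac}=c_{2}\Id+D_{2}$; the second equation is supplied for free by minimality (\cite[Theorem 4.3]{LAURET3}). Since you do invoke minimality, this part of your argument is repairable, but as written the reduction of the theorem to a statement about $\chP$ alone is not justified.

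Second, and more seriously, your claim that minimality \comillas{forces $\chP$ to be diagonal in the nice basis} is false, and the verification plan built on it would not go through. By the Proposition in Section 2 (completeness of the LSA-structure induced by $\omega$), the Chern--Ricci operator of any almost K\"ahler structure on a unimodular Lie algebra is \emph{nilpotent}; a diagonal nilpotent operator would be zero, whereas the three-step algebras in Table \ref{minmetricas3pasosCHP} have nonzero $\chP$ of the strictly triangular form $a(E_{4,1}-E_{6,3})+b(E_{5,1}-E_{6,2})$. Consequently $c=0$ and $D=\chP$ is a non-diagonal nilpotent candidate derivation, so \comillas{reading off eigenvalues} yields nothing, and the derivation identity does not reduce to one scalar equation per nonzero bracket (that reduction is valid only for diagonal $D$). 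The content of the case-by-case verification is precisely that these specific nilpotent operators are derivations of the corresponding $\widetilde{\mu}$; your proposal misdescribes what must be checked, so the essential computational step of the proof is not actually carried out or even correctly set up.
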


In general, it is a difficult problem to know when a symplectic nilpotent
Lie algebra admits a minimal compatible metric. Such problem is equivalent to determining if a orbit of the natural action of $\Spe(n,\RR)$ on $\Lambda^2(\RR^{2n})^*\otimes\RR^{2n}$ is \textit{distinguished}, i.e. we must determine when a orbit contains a critical point of the norm-square of the moment map $\mm_{\spg}$ associated to the action. In \cite{FERNANDEZ-CULMA2}, by using convexity properties of the moment map and recent results of Michael Jablonski in \cite{JABLONSKI2}, we gave a criterion to know when a \textit{nice element} of a real reductive representation has a distinguished orbit. Such result can be considered as a generalization of Nikolayevsky's nice basis criterium (see \cite[Theorem 3]{NIKOLAYEVSKY2}). Theorems \ref{th1} and \ref{th2} are proved by using such criterion, and so, for the convenience of the reader, we give the corresponding criterion in the context of canonical compatible metrics for geometric structures on nilmanifolds (subsection \ref{canonicalmetric}).

\section {Preliminaries}

\subsection{Soliton solutions for the SCF on Lie groups}

Let $\GrpG$ be a Lie group admitting an left-invariant almost K\"{a}hler structure; i.e there exist a symplectic structure $\omega$ on $\ggo$, an almost complex structure $J$ on $\ggo$ and a inner product $\ip$ on $\ggo$ satisfying the \textit{compatibility condition}
\begin{eqnarray*}
  \omega(X,Y)=\ipa{JX}{Y}, \, \forall X,Y \in \ggo.
\end{eqnarray*}
By symplectic structure on $\ggo$ we mean that the non-degenerate skew-symmetric bilinear form $\omega$ is \textit{closed}:
\begin{eqnarray}\label{condsymplectic}
  \omega([X,Y],Z)+\omega([Y,Z],X)+\omega([Z,X],Y)=0
\end{eqnarray}
for any $X$, $Y$ and $Z$ in $\ggo$.

Given a bilinear form on $\ggo$, say $\fnn{B}{\ggo \times \ggo}{\RR}$, the complexified part of $B$ and the anti-complexified part of $B$, denoted by $B^{c}$ and $B^{ac}$ respectively, are defined to be
\begin{eqnarray*}
B^{c}(\cdot,\cdot)=\frac{1}{2}(B(\cdot,\cdot)+B(J\cdot,J\cdot)) \mbox{ and }  B^{ac}(\cdot,\cdot)=\frac{1}{2}(B(\cdot,\cdot)-B(J\cdot,J\cdot)).
\end{eqnarray*}In the same manner are defined the complexified part and anti-complexified part of a linear map $\fnn{T}{\ggo}{\ggo}$, denoted by $T^{c}$ and $T^{ac}$, to be
\begin{eqnarray*}
T^{c}=\frac{1}{2}(T-JTJ) \mbox{ and } T^{ac}=\frac{1}{2}(T+JTJ)
\end{eqnarray*}

From now on, the transpose of a linear map $\fnn{T}{\ggo}{\ggo}$ with respect to $\ip$ and $\omega$ are  denoted by $T^{\trans}$ and $T^{{\trans}_{\omega}}$ respectively (Note that $T^{{\trans}_{\omega}} = -JA^{\trans}J$).

Let $\chPform$ be the Chern-Ricci form of the left-invariant almost K\"{a}hler structure $(\ip,J,\omega)$. It is proved in \cite[Proposition 4.1]{VEZZONI1} and \cite{POOK1} that $\chPform$ is given by
\begin{eqnarray}\label{CRform}
  \chPform(X,Y) = \frac{1}{2}\left( \tr(\ad_{J[X,Y]}) - \tr(J ad_{[X,Y]} ) \right).
\end{eqnarray}

The above expression has interesting consequences, among them let us mention, for instance, if $\ggo$ is a two-step nilpotent Lie algebra ($\ad_{[\cdot,\cdot]}=0$), then $\chPform = 0$; i.e. $(\ggo,\ip,J,\omega)$ is Chern-Ricci flat (\cite[Proposition 4.1]{VEZZONI1} or \cite[Proposition 2]{POOK1}).

From Equation (\ref{CRform}), it is easy to see that there exists an $\widehat{H}\in \ggo$ such that
\begin{eqnarray}
  \chPform(X,Y) = \omega(\widehat{H},[X,Y])
\end{eqnarray}
because $\omega$ is non-degenerate. Such $\widehat{H}$ can be taken to be $\frac{1}{2} \sum \ad_{e_i}^{\trans}e_i + \frac{1}{2}\sum J\ad_{e_i}^{\trans}(Je_i)$
where $\{e_1,\ldots,e_n\}$ is a orthonormal basis of $\ggo$. Therefore, if $\chP$ is the Chern-Ricci operator; i.e. $\chP$ is the linear transformation of $\ggo$ such that $\chPform(X,Y)=\omega(\chP X,Y)$ for all $X,Y \in \ggo$, then $\chP = \ad_{\widehat{H}} + \ad_{\widehat{H}}^{{\trans}_{\omega}}$ (it is immediate from Equation (\ref{condsymplectic})).

According to the above formula for $\chP$, we can say more about the Chern-Ricci form $\chPform$. To do this, let us make a short digression on left-symmetric algebras.

\begin{definition}\cite{SEGAL}
A \textit{left-symmetric algebra structure} (or LSA-structure for short) on a Lie algebra $\ggo$ is a bilinear product $\fnn{\centerdot}{\ggo \times \ggo}{\ggo}$ satisfying the conditions
\begin{enumerate}
  \item $[X,Y] = X\centerdot Y - Y \centerdot X$,
  \item $X\centerdot(Y\centerdot Z)-Y\centerdot(X\centerdot Z)-[X,Y]\centerdot Z = 0$
\end{enumerate}
for all $X$, $Y$ and $Z \in \ggo$.

Given $X \in \ggo$, let $\lambda_{X}$ denote (respectively $\rho_{X}$) the left multiplication by $X$ (respectively right multiplication by $X$) in the left-symmetric algebra: $\lambda_{X}Y=X\centerdot Y$ and $\rho_X(Y)=Y\centerdot X$ for all $Y \in \ggo$. The LSA-structure is called \textit{complete} if for every $X \in \ggo$, the linear transformation $\Id + \rho_{X}$ is bijective.
\end{definition}

Note that LSA-structure conditions are equivalent to having a left-invariant affine connection which is: (1) torsion free and (2) flat. These concepts play an important role in the study of affine crystallographic groups and of fundamental groups of affine manifolds, which are well-developed theories and have a rich history that includes challenging problems due to Louis Auslander and John W. Milnor. We refer the reader to \cite{BURDE1} for a comprehensive review of the literature on such topics.

On completeness of a LSA-structure, we have the following result due to Dan Segal.

\begin{theorem}\cite{SEGAL}
Let $\ggo$ be a Lie algebra over a field $\fk$ of characteristic zero and $\fnn{\centerdot}{\ggo \times \ggo}{\ggo}$ be a LSA-structure on $\ggo$.
The the following conditions are equivalent:
\begin{enumerate}
  \item The LSA-structure is complete.
  \item The LSA-structure is right nilpotent, i.e., $\rho_{X}$ is a nilpotent linear transformation, for all $X \in \ggo$.
  \item $\tr(\rho_{X})=0$ for all $X \in \ggo$.
\end{enumerate}
\end{theorem}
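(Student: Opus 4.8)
The proof splits along the three implications, with the two arrows out of $(2)$ being immediate: if every $\rho_X$ is nilpotent then $\Id+\rho_X$ is unipotent, hence bijective, which gives $(1)$, and a nilpotent operator has vanishing trace, which gives $(3)$. So the content of the theorem is $(3)\Rightarrow(2)$ together with $(1)\Rightarrow(3)$ (after which $(2)$ and $(3)$ are known to coincide and the cycle closes), and I would establish them in that order.

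For $(3)\Rightarrow(2)$ the plan is entirely algebraic. First I would translate the two LSA axioms into operator identities: the second LSA axiom says precisely that $[\lambda_X,\lambda_Y]=\lambda_{[X,Y]}$, i.e. $\lambda\colon\ggo\to\End(\ggo)$ is a representation, while rewriting the second LSA axiom for the triple $(Z,X,Y)$ and substituting the first axiom yields the companion identity
\begin{equation*}
\rho_{X\centerdot Y}=\rho_Y\rho_X+[\lambda_X,\rho_Y].
\end{equation*}
Next I would introduce the right powers $X^{[1]}=X$ and $X^{[k+1]}=X^{[k]}\centerdot X=\rho_X(X^{[k]})$, and prove by induction on $k$ that $\tr(\rho_X^{\,a}\rho_{X^{[k]}})=\tr(\rho_X^{\,a+k})$ for all $a\ge 0$; the inductive step expands $\rho_{X^{[k+1]}}=\rho_X\rho_{X^{[k]}}+[\lambda_{X^{[k]}},\rho_X]$, the first summand being absorbed by the inductive hypothesis and the commutator summand contributing nothing to the trace by cyclicity. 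Taking $a=0$ gives $\tr(\rho_X^{\,k})=\tr(\rho_{X^{[k]}})$ for every $k\ge 1$; hypothesis $(3)$ applied to $X^{[k]}\in\ggo$ forces this to vanish, so all power traces of $\rho_X$ are zero, and since $\fk$ has characteristic zero Newton's identities give that the characteristic polynomial of $\rho_X$ is $t^{\dim\ggo}$. Hence $\rho_X$ is nilpotent, which is $(2)$.

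For $(1)\Rightarrow(3)$ the idea is to pass to the eigenvalues of $\rho_X$. Completeness says $\Id+\rho_Y$ is invertible for all $Y\in\ggo$; applied to $Y=tX$ (using $\rho_{tX}=t\rho_X$) this shows $t\mapsto\det(\Id+t\rho_X)$ has no root in $\fk$, and globally that $X\mapsto\det(\Id+\rho_X)$ is a nowhere-vanishing polynomial function on the affine space underlying $\ggo$. Over an algebraically closed field this already finishes it: a polynomial with empty zero locus is a nonzero constant, so $\det(\Id+t\rho_X)\equiv 1$ in $t$, which forces $\rho_X$ to have no nonzero eigenvalue and, reading off the linear term, $\tr\rho_X\equiv 0$. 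Over a general field $\fk$ of characteristic zero one is reduced to showing that completeness is inherited by the scalar extension $\ggo\otimes_\fk\overline{\fk}$. This descent is where the genuine difficulty lies, because a hypersurface defined over $\fk$ may well have $\overline{\fk}$-points while having no $\fk$-points, so the LSA structure must be re-used. I would carry it out following Segal, by studying the affine holonomy of the LSA-structure — equivalently the Zariski closure in $\Gl(\ggo)$ of the subgroup generated by the operators $\Id+\rho_X$ — and showing that completeness over $\fk$ forces this algebraic group to be unipotent; that returns the nilpotency of every $\rho_X$ already over $\fk$, hence over $\overline{\fk}$, and $(3)$ follows.

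The main obstacle is precisely this last step: passing from an algebraically closed ground field to an arbitrary field of characteristic zero in $(1)\Rightarrow(3)$. Everything else is either a one-line observation or the short trace computation above, but excluding the scenario in which $\rho_X$ has a nonzero (necessarily non-rational) eigenvalue while $\Id+t\rho_X$ stays invertible for every $t\in\fk$ cannot be done by polynomial bookkeeping alone and needs the structure theory of complete LSA-structures.
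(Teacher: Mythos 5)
The paper does not prove this theorem; it is quoted from Segal's article, so there is no internal proof to compare against, and your proposal has to be judged on its own. The bulk of it holds up. The implications $(2)\Rightarrow(1)$ and $(2)\Rightarrow(3)$ are indeed immediate, and your proof of $(3)\Rightarrow(2)$ is complete and correct: the operator identity $\rho_{X\centerdot Y}=\rho_Y\rho_X+[\lambda_X,\rho_Y]$ does follow from the two LSA axioms applied to the triple $(Z,X,Y)$, the trace induction $\tr(\rho_X^{\,a}\rho_{X^{[k]}})=\tr(\rho_X^{\,a+k})$ goes through because the commutator term dies under the trace, and characteristic zero then forces nilpotency from the vanishing of all power traces. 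This is exactly the identity $\tr(\rho_X^k)=\tr(\rho_{X^k})$ that the paper itself invokes (with references to Helmstetter, Kim and Segal) in the remark following its Proposition on the Chern--Ricci operator, and it is worth noting that $(3)\Rightarrow(2)$ is the only portion of the theorem the paper actually uses.

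The genuine gap is in $(1)\Rightarrow(3)$. Your argument is complete only when $\fk$ is algebraically closed; over a general field of characteristic zero you correctly diagnose the obstruction --- e.g.\ over $\RR$ an operator with eigenvalues $\pm\sqrt{-1}$ satisfies $\det(\Id+t\rho_X)=(1+t^2)^{m}\neq 0$ for every real $t$, so invertibility of $\Id+t\rho_X$ along the line $\fk X$ proves nothing --- but you do not resolve it. Saying you would "carry it out following Segal, by studying the affine holonomy \dots and showing that completeness forces this algebraic group to be unipotent" is a citation of the missing argument, not a proof of it: nothing in your sketch explains how the hypothesis that $\Id+\rho_Y$ is invertible for \emph{every} $Y\in\ggo$ (rather than merely for scalar multiples of a fixed $X$) is brought to bear, nor how unipotence of that Zariski closure is actually derived from completeness over $\fk$. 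Since $(1)\Rightarrow(3)$ is the one implication whose two endpoints are not already linked by your other arguments, the equivalence of all three conditions is not established by the proposal as written; only $(2)\Leftrightarrow(3)$ and $(2)\Rightarrow(1)$ are.
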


From the above theorem, we have a additional property on the Chern-Ricci operator

\begin{proposition}
Let $\ggo$ be a unimodular Lie algebra and $(\ip,J,\omega)$ be an almost-K\"{a}hler
structure on $\ggo$. Then its Chern-Ricci operator $\chP$ is a nilpotent operator.
\end{proposition}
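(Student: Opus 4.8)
The strategy is to reduce the statement to the theorem of Segal recalled above, by recognizing $\chP$ as a \emph{right} multiplication operator of the canonical left-symmetric product attached to the symplectic form. Recall that any symplectic Lie algebra $(\ggo,\omega)$ carries the LSA-structure $\centerdot$ defined by
\[
\omega(X\centerdot Y,Z)=-\omega(Y,[X,Z]),\qquad X,Y,Z\in\ggo .
\]
Condition (1) in the definition of an LSA-structure is, via non-degeneracy of $\omega$, precisely a reformulation of the closedness identity (\ref{condsymplectic}), while condition (2) is equivalent to $[\lambda_{X},\lambda_{Y}]=\lambda_{[X,Y]}$, which follows from (\ref{condsymplectic}) and the Jacobi identity. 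Now, using (\ref{condsymplectic}) one rewrites $\omega(\widehat{H},[X,Y])=\omega([\widehat{H},X],Y)+\omega(X,[\widehat{H},Y])$, and from the defining relation of $\centerdot$ one has $\omega(X,[\widehat{H},Y])=-\omega(\lambda_{\widehat{H}}X,Y)$. Since $\omega(\chP X,Y)=\chPform(X,Y)=\omega(\widehat{H},[X,Y])$, this gives $\omega(\chP X,Y)=\omega\big((\ad_{\widehat{H}}-\lambda_{\widehat{H}})X,Y\big)$ for all $X,Y$; as $\ad_{\widehat{H}}=\lambda_{\widehat{H}}-\rho_{\widehat{H}}$ (condition (1) again), we obtain the key identity
\[
\chP=-\rho_{\widehat{H}} .
\]

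Consequently it suffices to prove that $\rho_{X}$ is nilpotent for every $X\in\ggo$. By Segal's theorem — applicable since $\ggo$ has characteristic zero — this is equivalent to the LSA-structure $\centerdot$ being complete, i.e. to $\tr(\rho_{X})=0$ for all $X\in\ggo$, and this is where the unimodularity hypothesis enters. Fix a basis $\{e_{i}\}$ of $\ggo$ and let $\{e^{i}\}$ be the basis $\omega$-dual to it, $\omega(e_{i},e^{j})=\delta_{i}^{j}$; using the trace formulas $\tr T=\sum_{i}\omega(Te_{i},e^{i})=\sum_{i}\omega(e_{i},Te^{i})$ one computes $\tr(\rho_{X})=-\omega\big(X,\sum_{i}[e_{i},e^{i}]\big)$. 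A further application of (\ref{condsymplectic}) to each bracket $[e_{i},e^{i}]$, together with the same two trace formulas, yields $\omega\big(\sum_{i}[e_{i},e^{i}],Y\big)=-2\,\tr(\ad_{Y})$ for every $Y\in\ggo$. Unimodularity says $\tr(\ad_{Y})=0$ for all $Y$, so $\sum_{i}[e_{i},e^{i}]=0$ by non-degeneracy of $\omega$, whence $\tr(\rho_{X})=0$ for all $X$.

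Putting the pieces together, Segal's theorem now gives that $\rho_{X}$ is nilpotent for every $X$, in particular $\rho_{\widehat{H}}$ is, and hence $\chP=-\rho_{\widehat{H}}$ is a nilpotent operator. The only substantive point in the argument is the vanishing $\sum_{i}[e_{i},e^{i}]=0$ under unimodularity (equivalently, the well-known fact that a symplectic Lie algebra is unimodular precisely when its canonical LSA-structure is complete); the identification $\chP=-\rho_{\widehat{H}}$ and the reduction to Segal's theorem are then purely formal. I expect the bookkeeping with (\ref{condsymplectic}) in the two trace computations of the second paragraph to be the fiddliest part, but it is routine.
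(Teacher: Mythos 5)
Your argument is correct and follows essentially the same route as the paper: identify $\chP=-\rho_{\widehat{H}}$ for the canonical LSA-structure induced by $\omega$, show unimodularity forces $\tr(\rho_{X})=0$, and invoke Segal's theorem to get completeness and hence nilpotency. The only cosmetic difference is that the paper obtains $\tr(\rho_{H})=-\tr(\ad_{H}+\ad_{H}^{\trans_{\omega}})=-2\tr(\ad_{H})$ directly, whereas you reach the same identity through an $\omega$-dual basis computation.
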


\begin{proof}
Consider the usual LSA-structure $\fnn{\centerdot}{\ggo \times \ggo}{\ggo}$ on $\ggo$ induced by the symplectic structure,
which is defined implicitly by
\begin{eqnarray}\label{eq1}
  \omega(H,[X,Y])=-\omega(X\centerdot H,Y),
\end{eqnarray}
for any $H$, $X$ and $Y \in \ggo$.

Given $H \in \ggo$, let $P_{H}$ denote the linear transformation such that
\begin{eqnarray}\label{eq2}
  \omega(H,[X,Y])= \omega(P_{H}X,Y), \forall X,Y \in \ggo.
\end{eqnarray}
From Equation (\ref{condsymplectic}), it follows that
\begin{eqnarray*}
  \ad_{X}^{\trans_{\omega}}(H)&=& \ad_{H}^{\trans_{\omega}}(X)+ \ad_{H}(X),
\end{eqnarray*}
and, in consequence, $P_{H} = \ad_{H}^{\trans_{\omega}} + \ad_{H}$. Since
$\ggo$ is unimodular ($\tr(\ad_{Z})=0, \, \forall Z\in \ggo$), $\tr(P_{H})=2\tr(\ad_{H})=0$,
for any $H\in \ggo$.

By Equations (\ref{eq1}) and (\ref{eq2}), we have $\rho_{H}=-P_{H}$, and, on account of the above-mentioned,
$\tr(\rho_{H})=0$ for any $H\in \ggo$, which implies that the LSA-structure is complete, and so,
$\rho_{H}$ is a nilpotent linear transformation for any $H\in\ggo$. Since the Chern-Ricci operator
$\chP$ is equal to $P_{\widehat{H}}=-\rho_{\widehat{H}}$ to certain $\widehat{H} \in \ggo$, it completes the proof.
\end{proof}

\begin{remark}
 A direct proof of the above proposition can be given by using the formula:
\begin{eqnarray}
\tr(\rho_{X}^{k}) &=& \tr(\rho_{X^{k}}), \,  \forall X \in \ggo
\end{eqnarray}
where $X^{k}=X^{k-1}\centerdot X$ with $k \in \NN$ (see \cite[Proposition 15]{HELMSTETTER1}, \cite[Theorem 2.2]{KIM1} or \cite[Proposition 2]{SEGAL})
\end{remark}

Having disposed of this preliminary information on the Chern-Ricci form,
we can now return to our objective.

\begin{definition}\cite[Definition 7.2]{LAURET7}\label{solitondef2}
An almost K\"{a}hler structure $(\ip,J,\omega)$ on a Lie algebra $\ggo$ is called \textit{soliton} if for some $c\in\RR$ and $D \in \Der(\ggo)$,
\begin{eqnarray}
  \left\{\begin{array}{l}
    \chP = c\Id + \frac{1}{2}(D-JD^{\trans}D)\\
    \chP^{c} + \Ric^{ac} = c\Id + \frac{1}{2}(D + D^{\trans})
 \end{array}\right.
\end{eqnarray}
\end{definition}

It is proved in \cite[Lemma 7.1]{LAURET7} that the above definition is equivalent to that the solution to the SCF starting in $(\ip,J,\omega)$ is
self-similar in the sense of la Condition (\ref{solitondef1}).

Note that if the almost K\"{a}hler structure is in fact K\"{a}hler, then such structure is a soliton if and only if $\ip$ is a semi-algebraic algebraic Ricci soliton; $\Ric=c\Id + \frac{1}{2}(D + D^{\trans})$ (because $\chP=\Ric$).

Given that the SCF evolves the metric and the symplectic structure, preserving the compatibility, one expects that, in general, it is not enough to have a \comillas{distinguished} metric or a \comillas{distinguished} symplectic structure in order to have a soliton.

\begin{example}
Consider the Lie algebra $\ggo:=(\RR^{6},\mu)$ with
\begin{eqnarray*}
  \mu&=& \left\{ \begin{array}{l}
                [{e_1},{e_2}]={e_1},[{e_1},{e_3}]={e_1},[{e_1},{e_4}]=-2\,{e_6},[{e_1},{e_6}]=-2\,{e_5},\\
                {[{e_2},{e_5}]}=-2\,{e_5},[{e_2},{e_6}]=-{e_6},[{e_3},{e_4}]=2\,{e_4},[{e_3},{e_6}]={e_6} \end{array}\right.
\end{eqnarray*}
and the almost-K\"{a}hler structure on $\ggo$ given by $(\ip,J,\omega_{cn})$ where $\ip$ is the
canonical inner product of $\RR^6$ and $\omega_{cn}$ is the canonical symplectic form of $\RR^6$;
$\omega_{cn}=e_{1}^{\ast}\wedge e_{6}^{\ast} + e_{2}^{\ast}\wedge e_{5}^{\ast} + e_{3}^{\ast}\wedge e_{4}^{\ast}$.

An easy computation shows that $(\ip,J,\omega_{cn})$ is an Einstein strictly almost K\"{a}hler structure on $\ggo$. In fact, $N_{J}(e_1,e_3)=4e_1 \neq 0$ where $N_{J}$ is the Nijenhuis tensor and
\begin{eqnarray*}
  \Ric &=& \diag(-4, -3, -3, -2, 2, 0) -\frac{1}{2}(0,6,6,0,0,0)-\diag(2,0,0,4,8,6)\\
       &=& -6\Id.
\end{eqnarray*}
In consequence, the Ricci tensor is $J$-invariant; $\Ric^{ac}=0$.

Let $\widehat{H} = 3e_2-e_3$, which is such that $\chPform(X,Y)=\omega_{cn}(\widehat{H},[X,Y])$, thus the Chern-Ricci operator $\chP$ is given by
\begin{eqnarray*}
  \chP &=& \diag(-6,-6,-2,-2,-6,-6).
\end{eqnarray*}
In consequence, the Chern-Ricci operator is symmetric; $\chP^{c}=\chP$.

In this case, the soliton condition may then be reduced to $\chP = c\Id + \frac{1}{2}(D + D^{\trans})$, with $D \in \Der(\ggo)$. Since the algebra of derivations of $\ggo$ is given by
\begin{eqnarray*}
\Der(\ggo) &=& \spanv \left\{ \begin{array}{l}
E_{1,1}+2E_{5,5}+E_{6,6},E_{4,4}+E_{5,5}+E_{6,6},E_{6,1}-E_{4,3},\\
E_{5,1}+\frac{1}{2}(E_{6,2}-E_{6,3}),(E_{1,2}+E_{1,3})-2(E_{6,4}+E_{5,6},E_{5,2})
\end{array} \right\},
\end{eqnarray*}
a trivial verification shows that $(\ip,J,\omega)$ is not a soliton.
\end{example}

\begin{example}
Consider the family of strictly almost K\"{a}hler solvmanifolds given by the family of solvable Lie algebras $\ggo(\lambda_{1},\lambda_{2},\lambda_{3}):=(\RR^6,\mu_{\lambda_{1},\lambda_{2},\lambda_{2}})$
where
\begin{eqnarray*}
\mu_{\lambda_{1},\lambda_{2},\lambda_{2}} &=&\left\{\begin{array}{l}
[{e_1},{e_3}]= -\left( {\lambda_{{1}}}^{2}+{\lambda_{{2}}}^{2} \right) {e_3},
[{e_1},{e_4}]=\lambda_{{3}}{e_3}+
\left( {\lambda_{{1}}}^{2}+{\lambda_{{2}}}^{2} \right) {e_4},\\
{[{e_2},{e_6}]}= \left({\lambda_{{2}}}^{2}  -{\lambda_{{1}}}^{2}\right) {e_2}+2\,\lambda_{{1}}\lambda_{{2}}{e_5},\\
{[{e_5},{e_6}]}=2\,\lambda_{{1}}\lambda_{{2}}{e_2}+ \left( {\lambda_{{1}}}^{2}-{\lambda_{{2}}}^{2} \right) {e_5},
\end{array}\right.
\end{eqnarray*}
with strictly almost K\"{a}hler structure $(\ip,J,\omega_{cn})$ given by the canonical inner product of $\ip$ and the usual symplectic form $\omega_{cn}$ (see that $N_{J}(e_1,e_4)=\lambda_{3}e_3+2(\lambda_{1}^{2}+\lambda_{2}^{2})e$).

Let $\widehat{H}=\frac{1}{2}\lambda_{3}e_{6}$ which is such that $\chPform(X,Y)=\omega_{cn}(\widehat{H},[X,Y])$. Since $e_6 \perp^{\omega}[\ggo,\ggo]$, the above family is Chern-Ricci flat ($\chPform=0$).

An easy computation shows that
\begin{eqnarray*}
  \Ric^{ac}&=& \frac{\lambda_{3}}{4}\diag\left(-\lambda_{3},0,\left(\begin{array}{cc}2\lambda_{3} & 4(\lambda_{1}^{2}+\lambda_{2}^2)\\ 4(\lambda_{1}^{2}+\lambda_{2}^2) & -2\lambda_{3}\end{array}\right),0,\lambda_{3}\right)
\end{eqnarray*}
and a straightforward computation of the algebra of derivations when $\lambda_{3}\neq0$ shows that the above structure is a soliton if and only if $\lambda_{3}=0$, and in such case, $\Ric^{ac}=0$.
\end{example}

Some sufficient conditions to have a soliton has been given in \cite{LAURET7}. These conditions are more easily verifiable than those given in the Definition \ref{solitondef2}.

\begin{proposition}\cite[Proposition 7.4]{LAURET7}\label{sufsoliton}
  If an almost-K\"{a}hler structure $(\ip,J,\omega)$ on a Lie algebra $\ggo$ satisfies any of the following conditions
\begin{eqnarray}
\label{cond1}  \left\{\begin{array}{l} \chP + \Ric^{ac} = c\Id + D \end{array}  \right.  \\
\nonumber \mbox{or}\\
\label{cond2}  \left\{\begin{array}{l} \chP= c_{1}\Id + D_{1}  \\  \Ric^{ac} = c_{2}\Id + D_{2}\end{array}  \right.
\end{eqnarray}
with $c\mbox{'s}\in \RR$ and $D\mbox{'s}\in \Der(\ggo)$, then $(\ip,J,\omega)$ defines a soliton with same $c$ and $D$ in the Condition (\ref{cond1}), and $c=c_{1}+c_{2}$ and $D=D_{1}+D_{2}$  in the Condition (\ref{cond2}).
\end{proposition}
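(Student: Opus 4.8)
The plan is to reduce both cases to Condition~(\ref{cond1}) and then to split the single identity $\chP + \Ric^{ac} = c\Id + D$ into the two equations of Definition~\ref{solitondef2} by taking adjoints. First I would record three structural facts. From the formula $\chP = \ad_{\widehat{H}} + \ad_{\widehat{H}}^{{\trans}_{\omega}}$ obtained above, $\chP$ is $\omega$-symmetric, $\chP^{{\trans}_{\omega}} = \chP$; since $\chP^{{\trans}_{\omega}} = -J\chP^{\trans}J$, this reads $\chP^{\trans} = -J\chP J$, so the $\ip$-symmetric part of $\chP$ is exactly $\tfrac12(\chP + \chP^{\trans}) = \tfrac12(\chP - J\chP J) = \chP^{c}$. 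Second, $\Ric$ is $\ip$-symmetric, and because $J$ is $\ip$-orthogonal ($J^{\trans} = -J$) so is $\Ric^{ac} = \tfrac12(\Ric + J\Ric J)$; moreover any $\ip$-symmetric $S$ with $JSJ = S$ satisfies $S^{{\trans}_{\omega}} = -JS^{\trans}J = -JSJ = -S$, so $\Ric^{ac}$ is \emph{anti}symmetric with respect to $\omega$. Third, $\Id$ is symmetric for both $\ip$ and $\omega$.

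With these in hand, I would treat Condition~(\ref{cond1}) by two projections. Applying the $\omega$-symmetric projection $T \mapsto \tfrac12(T + T^{{\trans}_{\omega}})$ to $\chP + \Ric^{ac} = c\Id + D$ annihilates $\Ric^{ac}$, fixes $\chP$ and $c\Id$, and sends $D$ to $\tfrac12(D + D^{{\trans}_{\omega}}) = \tfrac12(D - JD^{\trans}J)$; this is exactly the first soliton equation. Applying the $\ip$-symmetric projection $T \mapsto \tfrac12(T + T^{\trans})$ to the same identity sends $\chP$ to $\chP^{c}$ (by the first structural fact), fixes $\Ric^{ac}$ and $c\Id$, and sends $D$ to $\tfrac12(D + D^{\trans})$; this is the second soliton equation $\chP^{c} + \Ric^{ac} = c\Id + \tfrac12(D + D^{\trans})$. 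Hence $(\ip,J,\omega)$ is a soliton with the same $c$ and $D$.

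For Condition~(\ref{cond2}) I would simply add the two given identities to get $\chP + \Ric^{ac} = (c_{1}+c_{2})\Id + (D_{1}+D_{2})$; since $\Der(\ggo)$ is a Lie algebra, $D_{1}+D_{2} \in \Der(\ggo)$, so this is an instance of Condition~(\ref{cond1}) with $c = c_{1}+c_{2}$ and $D = D_{1}+D_{2}$, and the previous paragraph applies.

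I do not expect a genuine obstacle: the argument lives entirely in $\End(\ggo)$ and uses nothing about completeness of the LSA-structure, nilpotency of $\chP$, or unimodularity. The only thing to watch is handling the two adjoints $T^{\trans}$ and $T^{{\trans}_{\omega}} = -JT^{\trans}J$ side by side, and in particular the two cross-compatibilities that make the projections land precisely on the equations of Definition~\ref{solitondef2}: the complexified part of an $\omega$-symmetric operator coincides with its $\ip$-symmetric part, and an $\ip$-symmetric anti-complexified operator is $\omega$-antisymmetric.
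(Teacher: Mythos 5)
Your argument is correct, but note that the paper itself offers no proof of this proposition: it is imported verbatim as \cite[Proposition 7.4]{LAURET7}, so there is nothing internal to compare against. What you supply is a clean, self-contained verification that only uses material already established in the preliminaries, and it is worth recording how it works: the two defining equations of Definition \ref{solitondef2} are exactly the $\omega$-symmetrization $T\mapsto \tfrac12(T+T^{{\trans}_{\omega}})$ and the $\ip$-symmetrization $T\mapsto\tfrac12(T+T^{\trans})$ of the single identity $\chP+\Ric^{ac}=c\Id+D$, because $\chP$ is $\omega$-symmetric (this follows even more directly than from $\chP=\ad_{\widehat{H}}+\ad_{\widehat{H}}^{{\trans}_{\omega}}$: the Chern--Ricci form $\chPform$ is a $2$-form and $\omega$ is skew, so $\omega(\chP X,Y)=\omega(X,\chP Y)$), while $\Ric^{ac}$ is $\ip$-symmetric, $J$-invariant in the sense $J\Ric^{ac}J=\Ric^{ac}$, and hence $\omega$-antisymmetric; the identity $\chP^{\trans}=-J\chP J$ then makes the $\ip$-symmetric part of $\chP$ equal to $\chP^{c}$. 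Two small points. First, your computation lands on $\chP=c\Id+\tfrac12(D-JD^{\trans}J)$, which confirms that the first equation of Definition \ref{solitondef2} as printed, $\chP=c\Id+\tfrac12(D-JD^{\trans}D)$, contains a typo ($D-JD^{\trans}D$ is not even linear in $D$; the intended expression is $D-JD^{\trans}J=D+D^{{\trans}_{\omega}}$), so your proof targets the corrected, intended statement. Second, the reduction of Condition (\ref{cond2}) to Condition (\ref{cond1}) by adding the two identities is immediate since $\Der(\ggo)$ is a linear subspace; this matches the asserted $c=c_1+c_2$, $D=D_1+D_2$. No gap.
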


Let $(\ngo,\omega)$ be a symplectic nilpotent Lie algebra and $\ip$ be a compatible metric with $(\ngo,\omega)$. If the anti-complexified part of the Ricci operator of $\ip$, $\Ric^{ac}$,  satisfies $\Ric^{ac}=c\Id+D$ for some $c\in \RR$ and $D \in \Der(\ngo)$, then such metric is \textit{minimal} (see \cite[Theorem 4.3]{LAURET3}). Thus, in the nilpotent case, some soliton almost-K\"{a}hler structures are given by minimal compatible metrics with the Chern-Ricci operator being a derivation.

\begin{example}
Consider the nilpotent Lie algebra $\ngo:=(\RR^8,\mu)$ with
\begin{eqnarray*}
\mu &=& \left\{\begin{array}{l}[{e_1},{e_2}]=\frac{\sqrt {14}}{14}{e_4},[{e_2},{e_5}]=\frac{\sqrt {14}}{14}{e_8},[{e_2},{e_6}]=\frac{\sqrt {14}}{14}{e_3},[{e_3},{e_7}]=\frac{\sqrt {14}}{14}{e_4},\\
{[{e_5},{e_7}]}=-\frac{\sqrt {14}}{14}{e_6},[{e_6},{e_7}]=-\frac{\sqrt {14}}{14}{e_1},[{e_7},{e_8}]=\frac{\sqrt {14}}{14}{e_3}
\end{array}\right.
\end{eqnarray*}
and the almost K\"{a}hler structure given by $(\ip,J,\omega_{cn})$ where $\ip$ is the canonical inner product of $\RR^{8}$ and
$\omega_{cn}$ is the usual symplectic form of $\RR^8$; $\omega_{cn} = e_{1}^{\ast}\wedge e_{8}^{\ast} + e_{2}^{\ast}\wedge e_{7}^{\ast} +e_{3}^{\ast}\wedge e_{6}^{\ast} +e_{4}^{\ast}\wedge e_{5}^{\ast}.$ Let $\widehat{H}=\frac{\sqrt{14}}{28}e_{7}$, which is such that $\chPform(X,Y)=\omega_{cn}(\widehat{H},[X,Y])$.
Since $e_{7}\perp^{\omega}[\ngo,\ngo]$, $(\ngo, \ip, J,\omega)$ is Chern-Ricci flat.

The anti-complexified part of the Ricci operator of $(\ngo,\ip)$ is such that
\begin{eqnarray*}
  \Ric^{ac} &=& \frac{1}{56}\diag(0,1,2,4,-4,-2,-1,0)\\
            &=& -\frac{3}{56}\Id + \frac{1}{56}\diag(3,4,5,7,-1,1,2,3)
\end{eqnarray*}
with $\diag(3,4,5,7,-1,1,2,3)$ being a derivation of $\ngo$.

It follows from Proposition \ref{sufsoliton} that $(\ip,J,\omega_{cn})$ is a
soliton K\"{a}hler structure on $\ngo$.
\end{example}

\begin{remark}
In the theory of nilsoliton metrics; minimal metrics on nilpotent Lie algebras, it is well-known that the eigenvalues of the \textit{Einstein derivation} are all positive integers (up to a positive multiple). More precisely, if $\ip$ is a nilsoliton metric on a nilpotent Lie algebra $\ngo$, with $\Ric=c\Id + D$ where $c\in\RR$ and $D \in \Der(\ngo)$, then there exist a positive constant $k$, such that all eigenvalues of $D$ lie in $k\NN$ (\cite[Theorem 4.14]{HEBER1}). The above example shows a subtle difference between nilsoliton metrics on nilpotent Lie algebras and minimal compatible metrics with symplectic Lie algebras.
\end{remark}

\subsection{Canonical compatible metrics for geometric structures on nilmanifolds}\label{canonicalmetric}

In this section we give a brief exposition of \textit{minimal compatible metrics}
for geometric structures on nilmanifolds (\cite{LAURET3}). Such approach is a way to study the problem of finding \comillas{the best metric} which is compatible with a fixed geometric $\gamma$ on a simply connected nilpotent Lie group. By using strong results from real geometric invariant theory (real GIT for short), the properties that make a minimal metric \comillas{special} are given in \cite{LAURET3}: a minimal metric is unique (up to isometry and scaling) when it exists, and it can be characterized as a \textit{soliton solution} of the \textit{invariant Ricci flow} (\cite[Theorem 4.4]{LAURET3}).

By using results given in \cite{FERNANDEZ-CULMA2}, we introduce the notion of \textit{nice basis} (Definition \ref{gamanice}) in the context of minimal metrics and give the corresponding criterion to know when a geometric structure $\gamma$ on a nilpotent Lie algebra admitting a $\gamma$-nice basis has a minimal compatible metric.

Let $(\GrpN,\gamma)$ be a class $\gamma$-nilpotent Lie group: $\GrpN$ is a simply connected nilpotent Lie group and $\gamma$ is an invariant geometric structure on $\GrpN$ (see \cite[Definition 2.1]{LAURET3}). We identify $\ngo$ with $\RR^n$ and so the structure of Lie algebra on $\ngo$ is given by a element $\mu \in \lamn$; $\ngo=(\RR^n,\mu)$ and the geometric structure $\gamma$ is given by left translation of a tensor on $\RR^n$ which we denote also by $\gamma$. In the same way, any left invariant compatible metric with $(\GrpN,\gamma)$ is defined by a inner product $\ipp$ on $\RR^n$.

By definition, there is no loss of generality in assuming that the canonical inner product of $\RR^{n}$, $\ip$, also defines a compatible metric with $(\GrpN,\gamma)$. Since the reductive group
\begin{eqnarray*}
\GrpG_{\gamma}=\{g \in \Glr : g\cdot \gamma = \gamma\}
\end{eqnarray*}
is self adjoint with respect to any compatible metric (it also follows easily from definition), then $\GrpG_{\gamma}$ is compatible with the usual Cartan decomposition of $\Glr$; it is to say: $\GrpG_{\gamma}=\GrpK_{\gamma} \exp(\ag_{\gamma}) \GrpK_{\gamma}$ with $\GrpK_{\gamma}$ a subgroup of the Orthogonal group $\Or$ and $\ag_{\gamma}$ a subalgebra of the diagonal matrices $\ag$.

\begin{example}\label{ejemploLibre}
Consider the free $2$-step nilpotent Lie algebra of $\rank 3$; $\ngo_{18}:=[e_1,e_2] =e_4, [e_1,e_3] = e_5, [e_2,e_3] = e_6$ and the
symplectic structure $\omega_{2}(s)$ on $\ngo_{18}$ given by
\begin{eqnarray*}
\omega_{2}(t)= e_1^{\ast}\wedge e_5^{\ast} + t e_1^{\ast}\wedge e_6^{\ast} -t e_2^{\ast}\wedge e_5^{\ast} + e_2^{\ast}\wedge e_6^{\ast} - 2te_3^{\ast}\wedge e_4^{\ast} \mbox{ \cite[Theorem 5]{KHAKIMDJANOV1}}.
\end{eqnarray*}
In general, it is well-known that given any symplectic form $\omega$, there exists a suitable change of basis such that $\omega$ is given by
the \comillas{canonical symplectic form}. In this case, we can try to do a change of basis of the form
$$
g=\diag\left(m_{{1,1}},m_{{2,2}},m_{{3,3}},m_{{4,4}},\left(\begin{array}{cc}m_{{5,5}}&m_{{5,6}}\\m_{{6,5}}&m_{{6,6}} \end{array}\right)\right)
$$
and to solve $g\cdot\omega_{2}(t)=\omega_{2}(t)(\cdot,\cdot)=\omega_{cn}$ for $ \{m_{{1,1}},\ldots,m_{{6,6}} \}$. Here, $\omega_{cn}=e_{1}^{\ast}\wedge e_{6}^{\ast} + e_{2}^{\ast}\wedge e_{5}^{\ast} +e_{3}^{\ast}\wedge e_{4}^{\ast}$.

The solution to this equation is given by
$$\begin{array}{l}
 \left\{ m_{{1,1}}=m_{{1,1}},m_{{2,2}}=m_{{2,2}},m_{{3,3}}=m_{{3,3}},m
_{{4,4}}=-\frac{1}{2}\,{\frac {1}{tm_{{3,3}}}}, \right.\\
\left.m_{{5,5}}=-{\frac {t}{ \left( {
t}^{2}+1 \right) m_{{2,2}}}},m_{{5,6}}={\frac {1}{ \left( {t}^{2}+1
 \right) m_{{1,1}}}},m_{{6,5}}={\frac {1}{ \left( {t}^{2}+1 \right) m_
{{2,2}}}},m_{{6,6}}={\frac {t}{ \left( {t}^{2}+1 \right) m_{{1,1}}}}
 \right\},
\end{array}$$
hence, we can take the particular solution defined by $ m_{{1,1}}=m_{{2,2}}=m_{{3,3}}=1$, which defines symplectomorphism from $(\ngo_{18},\omega_{2}(t))$ to $(\RR^6,\mu_{t},\omega_{cn})$ with
\begin{eqnarray*}
 \mu_{t} &=& \left\{[{e_1},{e_2}]=-2\,t\,{e_4},[{e_1},{e_3}]=-
t\,{e_5}+{e_6},[{e_2},{e_3}]={e_5}+t\,{e_6}\right..
\end{eqnarray*}

With respect to $\GrpG_{\omega_{cn}}$, we have the usual presentation of the \textit{symplectic group}, $\Spe(3,\RR)$:
\begin{eqnarray*}
 \Spe(3,\RR) &=&\left\{g \in \mathrm{GL}_{6}(\RR) / g^{\trans}Jg=J\right\}
\end{eqnarray*}
where $Je_{1}=e_{6}$, $Je_{2}=e_{5}$, $Je_{3}=e_4$ and $J^{2}=-\Id$, \comillas{the} maximal compact subgroup of $\Spe(3,\RR)$ is given by
the unitary group $\U(n)$ and a Cartan decomposition given by $\Spe(3,\RR) = \U(n)\exp(\ag_{\omega_{cn}})\U(n)$ with
\begin{eqnarray*}
  \ag_{\omega_{cn}}&=& \left\{ \diag(-x_{1},-x_{2},-x_{3},x_{3},x_{2},x_{1}) : x_{i} \in \RR \right\}.
\end{eqnarray*}
\end{example}

\begin{definition}\cite[Definition 2.2]{LAURET3}\label{defRicci}
Let $\ipp$ be a \textit{compatible metric} with the class-$\gamma$ nilpotent Lie group $(\GrpN,\gamma)$. Consider the orthogonal projection $\Ric^{\gamma}_{\ipp}$ of the Ricci operator $\Ric_{\ipp}$ on $\ggo_{\gamma} = \Lie(\mathrm{G}_{\gamma})$ with respect to the inner product $\ippd$ of $\glr$ induced by $\ipp$; i.e. for any $A$, $B$ in $\glr$, ${( \hspace{-0.5mm} (A, B) \hspace{-0.5mm} )}=\tr(AB^{\trans})$ where $B^{\trans}$ denote the transpose of $B$ with respect to $\ipp$. $\Ric^{\gamma}_{\ipp}$ is said to be \textit{invariant Ricci operator}, and the corresponding \textit{invariant Ricci tensor} is given by $\ricci^{\gamma}=\ippa{\Ric^{\gamma}\cdot}{\cdot}$.
\end{definition}

\begin{example}
  In the symplectic case, it is easy to see that the invariant Ricci operator coincides with the anti-complexified Ricci tensor, i.e. if $\ipp$ is a compatible metric with $(\ngo, \omega)$, then
\begin{eqnarray*}
\Ric^{\omega}_{\ipp}&=&\Ric^{ac}_{\ipp}=\frac{1}{2}\left(\Ric_{\ipp} + J_{\ipp}\Ric_{\ipp}J_{\ipp}\right)
\end{eqnarray*}
where  $J_{\ipp}$ is the linear transformation such that $\omega(\cdot,\cdot) = (J_{\ipp}\cdot,\cdot)$ .
\end{example}

\begin{definition}[\textbf{Minimal compatible metric}]\cite[Definition 2.3]{LAURET3}
A left invariant metric $\ip$ compatible with a class-$\gamma$ nilpotent Lie group $(\GrpN_{\mu},\gamma)$ is called \textit{minimal} if
$$
\tr(\Ric^{\gamma}_{\ip})^2=\min\left\{\tr(\Ric^{\gamma}_{\ipp})^2:\mbox{\begin{tabular}{l} $\ipp$ is a compatible metric with  $(\GrpN_{\mu},\gamma)$ \\
and $\scalar(\ipp)=\scalar(\ip)$
\end{tabular} } \right\}
$$
\end{definition}

Now we study the natural action of $\Glr$ (and $\GrpG_{\gamma}$) on $V:=\Lambda^2(\RR^{n})^*\otimes\RR^{n}$ given by the change of basis:
$$
g\cdot \mu(X,Y)=g \mu(g^{-1} X, g^{-1} Y), \, X,Y \in \RR^n, \, g\in\Glr, \, \mu \in V.
$$
The corresponding representation of $\g$ on $V$ is given by
$$
A\cdot\mu(X,Y)=A\mu(X,Y)-\mu(AX,Y)-\mu(X,AY), \, A\in\g \, \mu\in V.
$$

Consider the usual inner product $\ip$ on $V$ which is defined by the canonical inner product of $\RR^n$ as follows:
$$
\ipa{\mu}{\lambda}=\sum_{ijk}\ipa{\mu(e_i,e_j)}{e_k}\ipa{\lambda(e_i,e_j)}{e_k}, \, \mu, \lambda \in V.
$$
and let $\ipd$ the canonical inner product of $\g$ induced by the canonical inner product of $\RR^{n}$ (as in the Definition \ref{defRicci}).

We are now in a position to define the moment map of the above-mentioned action. This map is implicitly defined by
\begin{equation}\label{momentmap}
\begin{array}{rcl}
  \mm_{\g}:V &\longrightarrow &\g  \\
\ipda{\mm_{\g}(\mu)}{A} &=& \ipa{A\cdot \mu}{\mu},
\end{array}
\end{equation}
for all $A\in\g$ and $\mu\in V$.

Let $\Proj_{\ggo_{\gamma}}$ be denote the orthogonal projection of $\g$ on $\ggo_{\gamma}$ with respect to the inner product $\ipd$, then it is easy to see that the moment map for the action of $\GrpG_{\gamma}$ on $V$, $\mm_{\ggo_{\gamma}}$, is  $\Proj_{\ggo_{\gamma}} \circ\mm_{\g}$. The relationship between minimal metrics and the moment map is given by the following result

\begin{proposition}\cite[Propositon 4.2]{LAURET3}\label{riccimom}
Let $(\mathrm{N}_{\mu},\gamma)$ be a class-$\gamma$ nilpotent Lie group. Then
\begin{eqnarray}
4\Ric_{g\cdot \ip} &=&\mm_{\g}(g^{-1}\cdot\mu), \, \forall g\in \Glr\\
4\Ric ^{\gamma}_{h\cdot\ip} &=&\mm_{\mathfrak{g}_{\gamma}}(h^{-1}\cdot\mu), \, \forall h \in \GrpG_{\gamma}
\end{eqnarray}
where $\Ric_{g\cdot \ip}$ is the Ricci operator of the Riemannian manifold $(\mathrm{N}_{\mu},g\cdot\ip)$ with respect to the orthonormal basis $\{g\cdot e_1, \ldots , g \cdot e_n\}$ and $\Ric^{\gamma}_{h\cdot\ip}$ is the invariant Ricci operator of $(\mathrm{N}_{\mu},\gamma, h\cdot\ip)$ with respect to the orthonormal basis $\{h\cdot e_1, \ldots , h\cdot e_n\}$.
\end{proposition}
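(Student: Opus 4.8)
The plan is to reduce both identities to a single algebraic computation: that $\mm_{\g}(\mu)=4\,\Ric_{\ip}$, where $\ip$ is the canonical inner product of $\RR^n$ and $\Ric_{\ip}$ is the Ricci operator of the nilmanifold $(\mathrm{N}_\mu,\ip)$ (this is the case $g=\Id$ of the first displayed formula), and then to propagate this equality along the $\Glr$-orbit, respectively the $\GrpG_\gamma$-orbit, of $\mu$ by naturality. To prove $\mm_{\g}(\mu)=4\,\Ric_{\ip}$ I would compare two explicit expressions. Unwinding the defining equation \eqref{momentmap}, pairing $\mm_{\g}(\mu)$ against an arbitrary $A\in\g$ and expanding $\ipa{A\cdot\mu}{\mu}$ with the stated formula for the $\g$-action on $V$, one obtains a sum of terms, each quadratic in the structure constants $\ipa{\mu(e_i,e_j)}{e_k}$ and linear in $A$; since $\SO(n)$ acts on $V$ by $\ip$-orthogonal transformations one has $\ipa{A\cdot\mu}{\mu}=0$ for skew-symmetric $A$, so $\mm_{\g}(\mu)$ is symmetric and it suffices to test against symmetric $A$, which lets one read off $\mm_{\g}(\mu)$ explicitly. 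On the other side, the classical formula for the Ricci operator of a left-invariant metric simplifies drastically here: $\ngo=(\RR^n,\mu)$ is nilpotent, hence unimodular and with every $\ad_X$ nilpotent, so both the mean-curvature term and the $\tr(\ad_X^2)$-term vanish and $\Ric_{\ip}$ is left as a sum of terms quadratic in $\mu$ that match, coefficient by coefficient, the expression found for $\tfrac14\mm_{\g}(\mu)$. This comparison is the computational core of the argument.

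The first displayed identity for a general $g\in\Glr$ then follows by equivariance. The Lie group isomorphism $\varphi\colon\mathrm{N}_\mu\to\mathrm{N}_{g^{-1}\cdot\mu}$ with $\dif\varphi_e=g^{-1}$ is an isometry from $(\mathrm{N}_\mu,g\cdot\ip)$ onto $(\mathrm{N}_{g^{-1}\cdot\mu},\ip)$ carrying the $g\cdot\ip$-orthonormal basis $\{g\cdot e_i\}$ to the canonical basis $\{e_i\}$; hence the matrix of $\Ric_{g\cdot\ip}$ in the basis $\{g\cdot e_i\}$ equals the matrix of $\Ric_{g^{-1}\cdot\mu}$ in the canonical basis, which by the previous paragraph is $\tfrac14\mm_{\g}(g^{-1}\cdot\mu)$.

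For the $\gamma$-version one applies the same $\varphi$ with $h\in\GrpG_\gamma$: reading every object in the basis $\{h\cdot e_i\}$ turns $h\cdot\ip$, and hence the induced inner product on $\g$, into the canonical ones, turns $\Ric_{h\cdot\ip}$ into $\Ric_{h^{-1}\cdot\mu}$, and turns the subspace $\ggo_\gamma$ into $\Ad(h^{-1})\ggo_\gamma=\ggo_\gamma$ — the last equality being exactly the point where $h\in\GrpG_\gamma$ enters. Consequently $\Ric^\gamma_{h\cdot\ip}$, read in $\{h\cdot e_i\}$, becomes $\Proj_{\ggo_\gamma}\Ric_{h^{-1}\cdot\mu}=\tfrac14\Proj_{\ggo_\gamma}\mm_{\g}(h^{-1}\cdot\mu)=\tfrac14\mm_{\ggo_\gamma}(h^{-1}\cdot\mu)$, using the relation $\mm_{\ggo_\gamma}=\Proj_{\ggo_\gamma}\circ\mm_{\g}$ recorded above. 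I expect the main obstacle to be precisely this last bookkeeping step: one must check carefully that the $h\cdot\ip$-orthogonal projection onto $\ggo_\gamma$ that defines $\Ric^\gamma_{h\cdot\ip}$ in Definition \ref{defRicci} really coincides, after the change of basis, with the fixed canonical projection $\Proj_{\ggo_\gamma}$ — and this is where the self-adjointness of $\GrpG_\gamma$ with respect to every compatible metric (so that $h\cdot\ip$ is again compatible and $\GrpG_\gamma$ remains self-adjoint for it) does its work.
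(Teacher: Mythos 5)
The paper gives no proof of this proposition; it is quoted verbatim from \cite[Proposition 4.2]{LAURET3}. Your argument is correct and is essentially the standard one from that reference: the algebraic identity $\mm_{\g}(\mu)=4\Ric_{\ip}$ for nilpotent $\mu$ (using that the Killing-form and mean-curvature terms vanish), followed by transport along the orbit via the isometry with differential $g^{-1}$, and, for the $\gamma$-version, the observation that conjugation by $h\in\GrpG_{\gamma}$ preserves $\ggo_{\gamma}$ and intertwines the two trace inner products, so the projection defining $\Ric^{\gamma}$ becomes the canonical $\Proj_{\ggo_{\gamma}}$.
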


Hence, the problem of finding a minimal compatible metric with $(\mathrm{N}_{\mu},\gamma)$ is equivalent to find a minimum value of $||\mm_{\ggo_{\gamma}}||^2$ along the $\GrpG_{\gamma}$-orbit of $\mu$ (we recall that any compatible metric is of the form $h\cdot\ip$ with $h \in \GrpG_{\gamma}$). The above is exactly to know if the orbit $\GrpG_{\gamma}\cdot \mu$ is \textit{distinguished} for the action of $\GrpG_{\gamma}$ on $V$ ($\GrpG_{\gamma}\cdot \mu$ contains a critical point of $||\mm_{\ggo_{\gamma}}||^2$).

\begin{theorem}\cite[Theorem 4.3 and 4.4]{LAURET3}\label{minimalmetric}
Let $(\mathrm{N}_{\mu},\gamma)$ be a class-$\gamma$ nilpotent Lie group. $(\mathrm{N}_{\mu},\gamma)$ admits a minimal compatible metric if and only if the $\GrpG_{\gamma}$-orbit of $\mu$ is distinguished for the natural action of $\GrpG_{\gamma}$ on $V$. Moreover, there is at most one minimal compatible metric on $(\mathrm{N},\gamma)$ up to isometry (and scaling).
\end{theorem}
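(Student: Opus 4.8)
The plan is to recast both assertions, via Proposition~\ref{riccimom}, as statements about the norm-square of the moment map $\mm_{\ggo_\gamma}$ along the orbit $\GrpG_\gamma\cdot\mu$, and then to invoke the variational theory of the moment map of a real reductive representation. Concretely, since every left-invariant compatible metric with $(\GrpN_\mu,\gamma)$ has the form $h\cdot\ip$ with $h\in\GrpG_\gamma$, Proposition~\ref{riccimom} gives $4\Ricg_{h\cdot\ip}=\mm_{\ggo_\gamma}(h^{-1}\cdot\mu)$, so that $\tr(\Ricg_{h\cdot\ip})^2=\tfrac1{16}\|\mm_{\ggo_\gamma}(h^{-1}\cdot\mu)\|^2$; moreover in these coordinates $\scalar(h\cdot\ip)=-\tfrac14\|h^{-1}\cdot\mu\|^2$, so the normalization $\scalar(h\cdot\ip)=\scalar(\ip)$ is simply $\|h^{-1}\cdot\mu\|=\|\mu\|$. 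Hence $\ip$ is a minimal compatible metric exactly when $\mu$ minimizes $\nu\mapsto\|\mm_{\ggo_\gamma}(\nu)\|^2$ over $\GrpG_\gamma\cdot\mu\cap\{\,\|\nu\|=\|\mu\|\,\}$; equivalently, since $\mm_{\ggo_\gamma}$ is homogeneous of degree two, exactly when $\mu$ is a minimum of the scale-invariant functional $F_\gamma(\nu):=\|\mm_{\ggo_\gamma}(\nu)\|^2/\|\nu\|^4$ along the orbit $\GrpG_\gamma\cdot\mu$. Since replacing $\mu$ by any $h^{-1}\cdot\mu$ changes neither the orbit nor the notion of being distinguished, it is enough to prove that $F_\gamma$ attains its minimum along $\GrpG_\gamma\cdot\mu$ if and only if $\GrpG_\gamma\cdot\mu$ is distinguished, and that the minimizing vectors form a single $\GrpK_\gamma$-orbit up to scaling.

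The elementary half is a first-variation computation. Differentiating $F_\gamma(\exp(tA)\cdot\mu)$ at $t=0$, using the defining relation (\ref{momentmap}) of the moment map and the self-adjointness of $\GrpG_\gamma$, one finds that a critical point of $F_\gamma$ along the orbit corresponds to a compatible metric with $\Ricg=c\,\Id+D$ for some $c\in\RR$ and $D\in\Der(\ngo_\mu)$ --- equivalently, $\mm_{\ggo_\gamma}(\mu)\cdot\mu=c'\mu$ with $c'=\|\mm_{\ggo_\gamma}(\mu)\|^2/\|\mu\|^2\ge 0$ --- which is precisely what it means for $\GrpG_\gamma\cdot\mu$ to contain a critical point of $\|\mm_{\ggo_\gamma}\|^2$, i.e.\ to be distinguished. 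Thus a minimal compatible metric produces a distinguished orbit. For the converse one needs the genuine real-GIT input: because $\GrpG_\gamma$ is self-adjoint, the Cartan decomposition $\GrpG_\gamma=\GrpK_\gamma\exp(\ag_\gamma)\GrpK_\gamma$ is compatible with $\ip$, and the general variational theory of the norm-square of the moment map (Ness, Kirwan; in the real-algebraic setting, the works cited in \cite{LAURET3} together with \cite{JABLONSKI2}) shows, through convexity properties of the moment map along the geodesics of the symmetric space $\GrpG_\gamma/\GrpK_\gamma$, that any critical point of $F_\gamma$ along an orbit is in fact a global minimum of $F_\gamma$ on that orbit. Hence if $\GrpG_\gamma\cdot\mu$ contains a critical point $g^{-1}\cdot\mu$ of $F_\gamma$, the metric $g\cdot\ip$ minimizes $\tr(\Ricg)^2$ in its scalar-curvature class, i.e.\ it is a minimal compatible metric; together with the first paragraph this proves the ``if and only if''.

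The uniqueness clause follows from the same circle of ideas: the convexity properties just invoked also show that, when $F_\gamma$ attains its minimum along an orbit, the set where it does so is a single $\GrpK_\gamma$-orbit modulo the scaling $\nu\mapsto t\nu$ (which does not affect $F_\gamma$). Therefore two minimal vectors of equal norm in $\GrpG_\gamma\cdot\mu$ differ by an element $k\in\GrpK_\gamma$ of the orthogonal group fixing $\gamma$; translating back through the dictionary of the first paragraph, two minimal compatible metrics of equal scalar curvature on $(\GrpN_\mu,\gamma)$ differ by such a $k$, and since $k$ acts orthogonally and satisfies $k\cdot\gamma=\gamma$ one checks that the two metrics are intertwined by a Lie algebra automorphism of $\ngo_\mu$, hence are isometric as left-invariant metrics on the simply connected nilpotent group $\GrpN_\mu$. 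Allowing arbitrary rescalings yields ``up to isometry and scaling''.

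The step I expect to be the main obstacle is the converse implication together with the uniqueness clause, both of which rest on the convexity of the moment map along geodesics of $\GrpG_\gamma/\GrpK_\gamma$ and on the ensuing description of the minimal-vector set; making this rigorous in the real, non-compact reductive setting --- and verifying in particular that along a non-distinguished orbit the infimum of $F_\gamma$ is genuinely not attained, being only approached in the orbit closure, in a lower stratum --- is exactly where the machinery of real geometric invariant theory is indispensable. By contrast, the first-variation formula of the second paragraph, the scalar-curvature normalization of the first, and the passage between $\GrpK_\gamma$-congruence of brackets and isometry of the associated left-invariant nilmanifolds are routine once this analytic core is available.
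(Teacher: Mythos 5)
Your proposal is correct and follows essentially the same route as the source: the paper itself gives no proof of this statement (it is quoted from \cite{LAURET3}, Theorems 4.3 and 4.4), and your argument reconstructs exactly the cited one --- translating minimality into minimization of the normalized norm-square of the moment map along the $\GrpG_{\gamma}$-orbit via Proposition \ref{riccimom}, using the first variation for the easy direction, and delegating the converse and the single-$\GrpK_{\gamma}$-orbit description of minimal vectors to real GIT. You even correctly isolate the delicate point: the uniqueness clause rests on the minimal-vector result originally taken from \cite{MARIAN1}, whose proof contains an error, with the correct statements being \cite[Theorem 5.1]{JABLONSKI4} and \cite[Corollary 6.12]{HEINZNER3}, exactly as the remark following the theorem in the paper indicates.
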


\begin{remark}
The last part of the above theorem \label{minimalmetric} follows from strong results on critical points of the norm-square of a moment map. In \cite[Proposition 4.3 and 4.4]{LAURET3} is used a result of Alina Marian (\cite[Theorem 1]{MARIAN1}) to prove such part.  However, there is an error in the proof of Marian's Theorem 1. A correct proof can be found in \cite[Theorem 5.1]{JABLONSKI4} and \cite[Corollary 6.12]{HEINZNER3}.
\end{remark}

We are now in a position to introduce nice basis notion.

\begin{definition}[\textbf{$\gamma$-nice basis}]\label{gamanice}
We say that the canonical basis of $\RR^{n}$, $\{e_1,\ldots,e_n\}$, is a $\gamma$-nice basis of $(\RR^{n},\mu,\gamma)$ if for any metric of the form $a\cdot \ip$ with $a\in \exp(\ag_{\gamma})$ one has that  $\Ric^{\gamma}_{a\cdot\ip} \in \ag_{\gamma}$, where $\Ric^{\gamma}_{a\cdot\ip}$ is represented with respect to the orthonormal basis $\{a\cdot e_1,\ldots,  a\cdot e_n\}$ of $(\RR^n,\mu,a\cdot\ip)$.
\end{definition}

\begin{remark}
By Propositions \ref{riccimom} and \cite[Proposition 4.8]{FERNANDEZ-CULMA2}, the above definition is equivalent to saying that $\mu$ is a nice-element
for the natural action of $\GrpG_{\gamma}$ on $V$ (\cite[Defintion 3.3]{FERNANDEZ-CULMA2}).
\end{remark}

\begin{remark}
In general, it is difficult to know when a pair $(\GrpN,\gamma)$ admits a $\gamma$-nice basis, even if $\gamma=0$ (nilsoliton case). In \cite[Section 4]{FERNANDEZ-CULMA2} we study this problem in the general case of real reductive representations and some results obtained will be very useful in the study of minimal metrics.
\end{remark}

\begin{notation}
Let $(\RR^{n},\mu,\gamma)$ be such that $\{e_1,\ldots,e_n\}$ is a $\gamma$-nice basis. Let us denote by $\mathfrak{R}_{\gamma}(\mu)$ the ordered set of weights related with $\mu$ to the action of $\mathrm{G}_{\gamma}$ on $V$ (see \cite[Notation 2.5]{FERNANDEZ-CULMA2}); i.e. if $\{C_{i,j}^{k}\}$ are the structural constants of $(\RR^{n},\mu)$ in the basis $\{e_1,\ldots,e_n\}$ then
$$
\mathfrak{R}_{\gamma}(\mu):=\left\{\Proj_{\ggo_{\gamma}} (E_{k,k}-E_{i,i}-E_{j,j}) : C_{i,j}^{k}\neq 0\right\}
$$
where $\{E_{i,j}\}$ is the canonical basis of $\g$.

We denote by $\beta_{\mu}^{\gamma}$ the \textit{minimal convex combination} of the convex hull of $\mathfrak{R}_{\gamma}(\mu)$; i.e. $\beta_{\mu}^{\gamma}$ is the unique vector closest to the origin in the mentioned hull.

The Gram matrix of $(\mathfrak{R}_{\gamma}(\mu),\ipd)$ will be denoted by $\gramU_{\mu}^{\gamma}$; i.e. if $\mathfrak{R}_{\gamma}(\mu)_{p}$ is $p$-th element of $\mathfrak{R}_{\gamma}(\mu)$, then
$$
\gramU_{\mu}^{\gamma}(p,q)=\ipda{\mathfrak{R}_{\gamma}(\mu)_{p}}{\mathfrak{R}_{\gamma}(\mu)_{q}}
$$
\end{notation}

By using the above notation, it follows from \cite[Theorem 3.14]{FERNANDEZ-CULMA2} an aim in this note.

\begin{theorem}\label{nicegeneralizado}
Let $(\RR^{n},\mu,\gamma)$ be such that $\{e_1,\ldots,e_n\}$ is a $\gamma$-nice basis.
$(\mathrm{N}_{\mu},\gamma)$ admits a compatible minimal metric if and only if the equation
\begin{eqnarray}\label{eqgram}
\gramU^{\gamma}_{\mu}[x_i]=\lambda[1]
\end{eqnarray}
has a positive solution $[x_i]$ for some $\lambda \in \RR$. Moreover, in such case, there exists $a\in \exp(\ag_{\gamma})$ such that $a\cdot\ip$ defines a minimal compatible metric with $(\mathrm{N}_{\mu},\gamma)$.
\end{theorem}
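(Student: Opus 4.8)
The plan is to reduce Theorem~\ref{nicegeneralizado} to the characterization of distinguished orbits for nice elements established in \cite[Theorem 3.14]{FERNANDEZ-CULMA2}, via the dictionary between minimal compatible metrics and the moment map set up in Proposition~\ref{riccimom} and Theorem~\ref{minimalmetric}. First I would recall that since $\{e_1,\dots,e_n\}$ is a $\gamma$-nice basis, the remark following Definition~\ref{gamanice} tells us that $\mu$ is a nice element for the action of $\GrpG_\gamma$ on $V$, with associated set of weights $\mathfrak{R}_\gamma(\mu)$ sitting in $\ag_\gamma$ (after orthogonal projection). By Theorem~\ref{minimalmetric}, $(\mathrm{N}_\mu,\gamma)$ admits a minimal compatible metric if and only if the $\GrpG_\gamma$-orbit of $\mu$ is distinguished, i.e.\ contains a critical point of $\|\mm_{\ggo_\gamma}\|^2$. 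So it suffices to show that, for a nice element, the orbit is distinguished exactly when \eqref{eqgram} has a positive solution.

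The key computation is to identify, for $a=\exp(\sum t_k H_k)\in\exp(\ag_\gamma)$ acting on $\mu$, the projection $\Proj_{\ggo_\gamma}\mm_\g(a^{-1}\cdot\mu)=\mm_{\ggo_\gamma}(a^{-1}\cdot\mu)$. Writing $\mu$ in the nice basis with structure constants $C_{i,j}^k$, the vector $a^{-1}\cdot\mu$ has each structure constant scaled by a monomial in the $t_k$, and — using the niceness, which guarantees no cancellation in $\mm_\g$ between distinct weight spaces — one gets $4\Ric^\gamma_{a\cdot\ip}=\mm_{\ggo_\gamma}(a^{-1}\cdot\mu)=\sum_r d_r\,(\mathfrak{R}_\gamma(\mu))_r$ where the coefficients $d_r>0$ are (rescaled) squared norms of the scaled structure constants. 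Thus $\mm_{\ggo_\gamma}(a^{-1}\cdot\mu)$ always lies in the cone spanned by $\mathfrak{R}_\gamma(\mu)$ with positive coefficients. A critical point of $\|\mm_{\ggo_\gamma}\|^2$ in the orbit corresponds (Lauret's criterion, or directly) to $\mm_{\ggo_\gamma}(a^{-1}\cdot\mu)$ being a derivation of $\mu$, equivalently $\Ric^\gamma=c\Id+D$; translating through the weights, $\sum_r d_r (\mathfrak{R}_\gamma(\mu))_r$ is a derivation iff it has constant inner product $\lambda$ with each weight $(\mathfrak{R}_\gamma(\mu))_q$, i.e.\ iff $\gramU^\gamma_\mu[d_r]=\lambda[1]$. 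Since the $d_r$ range over all positive vectors as $a$ ranges over $\exp(\ag_\gamma)$ (up to scaling — the monomial map $t\mapsto(d_r(t))$ is onto the positive orthant modulo the obvious ambiguity), such an $a$ exists iff \eqref{eqgram} admits a positive solution $[x_i]$ for some $\lambda$.

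I would then assemble the two directions. For the ``if'' direction: given a positive solution $[x_i]$ to \eqref{eqgram}, solve for $a\in\exp(\ag_\gamma)$ realizing $d_r(a)=x_r$ (this is a linear system in $\log t_k$ obtained from the monomial exponents, solvable because the weights span the relevant space — here one invokes the surjectivity built into \cite[Theorem 3.14]{FERNANDEZ-CULMA2}); then $\Ric^\gamma_{a\cdot\ip}=\tfrac14\sum x_r(\mathfrak{R}_\gamma(\mu))_r$ has the form $c\Id+D$ with $D\in\Der(\mu)$, so $a\cdot\ip$ is a critical point, the orbit is distinguished, and by Theorem~\ref{minimalmetric} it is a minimal compatible metric. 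For the ``only if'' direction: a minimal compatible metric is some $h\cdot\ip$, and by a standard averaging/stratification argument for nice elements (again \cite[Theorem 3.14]{FERNANDEZ-CULMA2}) one may take $h\in\exp(\ag_\gamma)$; reading off its coefficients $d_r(h)>0$ gives a positive solution of \eqref{eqgram}.

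The main obstacle I anticipate is the ``only if'' direction — specifically, justifying that when a minimal compatible metric exists one may assume it arises from $\exp(\ag_\gamma)$ rather than from a general $h\in\GrpG_\gamma=\GrpK_\gamma\exp(\ag_\gamma)\GrpK_\gamma$. This is exactly the content that the nice-basis hypothesis is designed to handle (it is where \cite[Theorem 3.14]{FERNANDEZ-CULMA2} and the convexity of the moment map enter), so the proof will largely consist of carefully citing and transcribing that result into the present notation ($\gramU^\gamma_\mu$, $\mathfrak{R}_\gamma(\mu)$, $\Ric^\gamma$), rather than reproving it; the remaining work is the bookkeeping identifying $\mm_{\ggo_\gamma}$ with $4\Ric^\gamma$ and the weight-sum expansion, which is routine given Proposition~\ref{riccimom}.
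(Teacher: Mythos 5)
Your plan is correct and follows essentially the same route as the paper: the paper gives no independent argument but states the theorem as an immediate translation of \cite[Theorem 3.14]{FERNANDEZ-CULMA2} through the dictionary of Proposition~\ref{riccimom} (identifying $4\Ric^{\gamma}$ with the moment map) and Theorem~\ref{minimalmetric} (minimal metric $\Leftrightarrow$ distinguished orbit), which is precisely the reduction you describe. Your added detail on the weight-sum expansion and your (appropriately flagged) deferral of the surjectivity/convexity subtleties to the cited result match what that reference actually supplies.
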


\begin{remark}
The proof of Theorem \cite[Theorem 3.14]{FERNANDEZ-CULMA2} gives more, namely if $(\RR^{n},\mu,\gamma)$ admits a minimal compatible metric, then one can find a such metric by solving the equation
\begin{eqnarray}\label{equminimal}
\mm_{\ggo_{\gamma}}(a\cdot\mu)=\beta_{\mu}^{\gamma}
\end{eqnarray}
for $a\in \exp(\ag_{\gamma})$. Since,
$$
\beta_{\mu}^{\gamma}=\frac{1}{\sum x_p}\left( \sum x_p \mathfrak{R}_{\gamma}(\mu)_{p} \right)
$$
where $[x_i]$ is any positive solution to the Equation (\ref{eqgram}), in practice it is sometimes easy to solve the Equation (\ref{equminimal}).
\end{remark}

\section{Soliton almost K\"{a}hler structures}

In this section, we want to give those soliton almost K\"{a}hler structures on two- and three-step nilpotent Lie algebras of dimension $6$ that are obtained by minimal compatible metrics with symplectic structures. By following the classification given in \cite{KHAKIMDJANOV1} for $6$-dimensional symplectic nilpotent Lie algebras, a simple inspection of such classification list reveals that many pairs $(\ngo,\omega)$ are written in a $\omega_{cn}$-nice basis or that by using a suitable change of basis, these can be written in a nice basis. Here $\omega_{cn}$ is the canonical symplectic form of $\RR^{6}$; $\omega_{cn}=e_1^{\ast} \wedge e_{6}^{\ast} + e_2^{\ast} \wedge e_{5}^{\ast} + e_3^{\ast} \wedge e_{4}^{\ast} $.

We denote by $\mm_{\spg}$ the moment map corresponding to the action of the symplectic group $\GrpG_{\omega_{cn}}=\Spe(3,\RR)$ on $V$, and we have
$$
\ag_{\omega_{cn}}=\{\diag(-x_1,-x_2,-x_3,x_3,x_2,x_1) : x_i \in \RR \}\}
$$

Theorem \ref{nicegeneralizado} has been applied to each mentioned algebra individually. We will consider in detail only some examples, which we think to be representative; remaining cases are established in an entirely analogous way.

\subsection{Symplectic three-step nilpotent Lie algebras}

In this part, first we give a complete classification of minimal compatible metrics on symplectic three-step nilpotent Lie algebras. Then, we compute
the respective Chern-Ricci operator, which happens to be a derivation in this case. From Proposition \ref{sufsoliton}, we then have the proof for the Theorem \ref{th2}.

\begin{example}

We consider the nilpotent Lie algebra $\ngo_{11}$ given by $[e_1,e_2] = e_4, [e_1,e_4] = e_5,[e_2,e_3] = e_6, [e_2,e_4] = e_6$, which carries two curves of non-equivalent symplectic structures, namely $\omega_{1}(\lambda)=e^{\ast}_1 \wedge e^{\ast}_6 +e^{\ast}_2 \wedge e^{\ast}_5 + \lambda e^{\ast}_2 \wedge e^{\ast}_6 - e^{\ast}_3 \wedge e^{\ast}_4,$ with $\lambda \in \RR$ and $\omega_{2}(\lambda)=-\omega_{1}(\lambda)$ (By \cite[Theorem 5]{KHAKIMDJANOV1}).  Let us see the case of $\omega_{1}(\lambda)$, similar considerations apply to the other case.

In our approach, we need that the canonical inner product defines a compatible metric, which is similar to give a basis of $\ngo_{11}$ where the symplectic structure is defined by $\omega_{cn}$. To do this, we can try with a change of basis of the form
$$
g^{-1}=\diag\left( \left(\begin{array}{cc}m_{{1,1}}&m_{{1,2}}\\0 &m_{{2,2}} \end{array}\right), \left(\begin{array}{cc}m_{{3,3}}&m_{{3,4}}\\m_{{4,3}}&m_{{4,4}} \end{array}\right), \left(\begin{array}{cc}m_{{5,5}}&m_{{5,6}}\\0&m_{{6,6}} \end{array}\right)\right).
$$
Since we also need a $\omega_{cn}$-nice basis,  we can also try to get that $\mm_{\mathfrak{sp}}(\exp(X) \cdot g\cdot\ngo_{11}) \in \ag_{\omega_{cn}}$ for any
$X \in \ag_{\omega_{cn}}$.
If $\lambda \neq 0$ then by solving such system of equations, we have, for instance, a solution given by
$$\begin{array}{l}
\left\{ m_{{1,1}}=-\frac{1}{2}\,{\frac {\lambda}{m_{{5,6}}}},m_{{1,2}}=-\frac{1}{2}\,m_{{2,2}}\lambda,
m_{{2,2}}=m_{{2,2}},m_{{3,3}}=-{m_{{4,4}}}^{-1},m_{{3,4}}=0, \right.\\
\left.m_{{4,3}}=\frac{1}{2}\,{m_{{4,4}}}^{-1},m_{{4,4}}=m_{{4,4}},m_{{5,5}}={
m_{{2,2}}}^{-1},m_{{5,6}}=m_{{5,6}},m_{{6,6}}=-2\,{\frac {m_{{5,6}}}{
\lambda}} \right\}.
\end{array}
$$
If we let $m_{{2,2}}=1$, $m_{{4,4}}=1$, and $m_{{5,6}}=1$ then
$$
g=\left( \left(\begin{array}{cc} -\frac{2}{\lambda}&-1\\0& 1\end{array}\right), \left(\begin{array}{cc} -1 & 0 \\ \frac{1}{2}&1 \end{array}\right),\left(\begin{array}{cc}1&\frac{\lambda}{2}\\0&-\frac{\lambda}{2} \end{array}\right), \right)
$$
defines a symplectomorphism from $(\ngo_{11},\omega_{1}(\lambda))$ to $(\RR^{6},\mu_{\lambda},\omega_{cn})$ where
$$
\mu_{\lambda} := \left\{
\begin{array}{l}
 [{e_1},{e_2}]=-\frac{1}{2}\,\lambda\,{e_4},
 [{e_1},{e_3}]=-\frac{1}{4}\,\lambda\,{e_5},
 [{e_1},{e_4}]=-\frac{1}{2}\,\lambda\,{e_5},\\
{[{e_2},{e_3}]}=-\frac{1}{2}\,\lambda\,{e_5} + \frac{1}{4}\,\lambda\,{e_6},
 [{e_2},{e_4}]=-\frac{1}{2}\,\lambda\,{e_6}\\
\end{array}
\right.
$$
and $(\RR^{6},\mu_{\lambda},\omega_{cn})$ is written in a $\omega_{cn}$-nice basis.
For all $\lambda \in \RR \setminus \{0\}$, the Gram matrix is given by
$$
\gramU^{\omega_{cn}}_{\mu_{\lambda}} = \frac{1}{2}\left[ \begin {array}{ccc} 3&1&1\\ \noalign{\medskip}1&5&3
\\ \noalign{\medskip}1&3&3\end {array} \right]
$$
and since the general solution to $\gramU^{\omega_{cn}}_{\mu_{\lambda}}X=[1]$ is $X =\frac{1}{2}[1,0,1]^{\trans}$, for any $\lambda \neq 0$, $(\ngo_{11},\omega_{1}(\lambda))$ does not admit a minimal metric.

When $\lambda=0$, on the contrary, $(\ngo_{11},\omega_{1}(\lambda=0))$ admit a minimal metric. In fact, like above, we consider $g=\diag\left(1,1,\left(\begin{array}{cc}1&\frac{1}{2}\\0&-1 \end{array}\right),1,1\right)$. $g$ defines a symplectomorphism from $(\ngo_{11},\omega_{1}(0))$ to $(\RR^{6},\mu,\omega_{cn})$ with
$$
\mu_{} := \left\{
\begin{array}{l}
[{e_1},{e_2}]=\frac{1}{2}\,{e_3}-{e_4},
[{e_1},{e_4}]=-{e_5},
[{e_2},{e_3}]={e_6},
[{e_2},{e_4}]=-\frac{1}{2}\,{e_6}
\end{array}
\right.
$$
where $(\RR^{6},\mu,\omega_{cn})$ is written in a $\omega_{cn}$-nice basis.
The Gram matrix is given by
$$
\gramU^{\omega_{cn}}_{\mu_{}} = \frac{1}{2}\left[ \begin {array}{cc} 3&1\\ \noalign{\medskip}1&3\end {array}
 \right]
 $$
and the general solution to $\gramU^{\omega_{cn}}_{\mu_{\lambda}}X=[1]$ is $X=\frac{1}{2}[1,1]^{\trans}$. Since $X^{\prime}=X$ is a positive solution, $(\ngo_{11},\omega_{1}(0))$ admits a minimal metric. To find such metric, we solve the problem
$$
4\Ric^{ac}(\exp(Y)\cdot \mu) = \mm_{\spg}(\exp(Y)\cdot \mu)= \mcc(\mathfrak{R}_{\omega_{cn}}(\mu))
$$
with $Y \in \ag_{\omega_{cn}}$. Let $Y=\diag(\ln(2)+\frac{1}{4}\ln(3),0,-\frac{1}{4}\ln(3)+\frac{1}{2}\ln(2),\frac{1}{4}\ln(3)-\frac{1}{2}\ln(2),0,-\ln( 2 ) -\frac{1}{4}\, \ln(3))$, the change of basis given by $\exp(Y)$ defines
$$
\widetilde{\mu} := \left\{
\begin{array}{l}
[{e_1},{e_2}]=\frac{\sqrt {6}}{12}{e_3}- \frac{\sqrt {2}}{4}{e_4},
[{e_1},{e_4}]=-\frac{\sqrt {6}}{6}{e_5},
[{e_2},{e_3}]=\frac{\sqrt {2}}{4}{e_6},
[{e_2},{e_4}]=-\frac{\sqrt {6}}{12}{e_6}
\end{array}
\right.
$$
Since
$$
\mm_{\glg}(\widetilde{\mu}) =\frac{1}{6} \diag(-4,-4,-1,-1,2,2),
$$
it follows that
$$
\begin{array}{rcl}
\mm_{\spg}(\widetilde{\mu}) & = & \frac{1}{2}(\mm_{\glg}(\widetilde{\mu})+J.\mm_{\glg}(\widetilde{\mu}).J)\\
                            &&\\
                            & = & \frac{1}{2}\diag(-1,-1,0,0,1,1)\\
                            &&\\
                            & = & -\Id + \underbrace{\frac{1}{2}\diag(1,1,2,2,3,3)}_{\mbox{Derivation}}
\end{array}
$$
and thus, the canonical inner product of $\RR^6$ defines a minimal metric on $(\RR^6,\widetilde{\mu},\omega_{cn})$ .

A straightforward computation shows that $(\RR^{6},\mu_{\lambda},\ip,J,\omega_{cn})$ and $(\RR^{6},\widetilde{\mu},\ip,J,\omega_{cn})$ are Chern-Ricci flat, where $\ip$ is the canonical inner product of $\RR^6$.
\end{example}

\begin{example}\label{ejemplofeo}
We consider the nilpotent Lie algebra $\ngo_{13}$ given by $[e_1,e_2] = e_4, [e_1,e_3] = e_5, [e_1,e_4] = e_6, [e_2,e_3] = e_6$ and the curve of non-equivalent symplectic structures $\omega_{2}(\lambda)$ with $\lambda \neq 0$:
$$
\omega_{2}(\lambda) = e^{\ast}_1 \wedge e^{\ast}_6 +\lambda e^{\ast}_2 \wedge e^{\ast}_4 +e^{\ast}_2 \wedge e^{\ast}_5 + e^{\ast}_3 \wedge e^{\ast}_5.
$$
The change of basis given by $g=\left(1,\left(\begin{array}{cc}\lambda-\frac{1}{2}&-\frac{1}{2}\\1&1 \end{array}\right),\left(\begin{array}{cc}\frac{1}{2}&1\\1& 0\end{array}\right),1\right)$ defines a symplectomorphism from $(\ngo_{13},\omega_{2}(\lambda))$ to $(\RR^{6},\mu_{\lambda},\omega_{cn})$ with
$$
\mu_{\lambda} := \left\{
\begin{array}{l}
[{e_1},{e_2}]=-{\frac {1}{2\lambda}}{e_4} + {\frac {1}{\lambda}}{e_5},
[{e_1},{e_3}]={\frac { \left( 4\,\lambda-1 \right) }{4\lambda}}{e_4} + {\frac {1}{2\lambda}}{e_5},
[{e_1},{e_5}]={e_6},\\
{[{e_2},{e_3}]}={\frac {1}{\lambda}}{e_6}.
\end{array} \right.
$$
\end{example}
It is a simple matter to see that $(\RR^{6},\mu_{\lambda},\omega_{cn})$ is written in a $\omega_{cn}$-nice basis and that if $\lambda \neq \frac{1}{4}$ then the Gram matrix is given by
$$
\gramU^{\omega_{cn}}_{\mu_{\lambda}} = \frac{1}{2}\left[ \begin {array}{cccc} 3&3&3&1\\ \noalign{\medskip}3&5&1&0
\\ \noalign{\medskip}3&1&5&2\\ \noalign{\medskip}1&0&2&5\end {array}
 \right].
$$
The general solution to $\gramU^{\omega_{cn}}_{\mu_{\lambda}}X=[1]$ is $X=\frac{1}{25}[10-50\,t , 4+25\,t , 25\,t , 8 ]^{\trans}$; since $X^{\prime}=\frac{2}{25}[3,3,1,4]^{\trans}$ is a positive solution, $(\ngo_{13},\omega_{2}(\lambda))$ admits a minimal metric.

Although in this case, it is difficult to give an explicit formula for such curve of metrics, we can say that if such metrics have scalar curvature equal to $-\frac{1}{4}$ then $4\Ric^{ac} = \mm_{\mathfrak{sp}} = -\frac{25}{22}\Id + \frac{5}{11}\diag(1,2,2,3,3,4)$. Furthermore, they are given in the family of symplectic nilpotent Lie algebras $(\RR^6,\mu_{t},\omega_{cn})$ with
$$
\mu_{t}:=\left\{
\begin{array}{l}
[{e_1},{e_2}]=-t{e_4} \pm \frac{1}{22}\,\sqrt {99-1452\,{t}^{2}}{e_5},
[{e_1},{e_3}]= \pm \frac{1}{22}\,\sqrt {55-1452\,{t}^{2}}{e_4}+t{e_5},\\
{[{e_1},{e_5}]}= \frac{\sqrt {22}}{11}{e_6},
[{e_2},{e_3}]=2\,t{e_6}
\end{array}  \right.
$$
The Chern-Ricci operator of $(\RR^6,\mu_{t},\ip,J,\omega_{cn})$ is given by
\begin{eqnarray*}
\chP&=&\frac{t\sqrt{22}}{22}(E_{4,1}-E_{6,3})\pm{\frac {\sqrt {22}}{484}}\,\sqrt {99-1452\,{t}^{2}}(E_{5,1}-E_{6,2})
\end{eqnarray*}
which is easily seen to be a derivation of $(\RR^6,\mu_{t})$. From Proposition \ref{sufsoliton}, we have $(\ip,J,\omega_{cn})$ defines a soliton K\"{a}hler structure on $(\RR^6,\mu_{t})$.

If $\lambda = \frac{1}{4}$, one can proceed as above and showing that $(\ngo_{13},\omega_{2}(\lambda = \frac{1}{4}))$ admits a soliton K\"{a}hler structure.

\begin{example}
For a final example, Consider the nilpotent Lie algebra $\ngo_{12}$ given by $[e_1,e_2] = e_4, [e_1,e_4] = e_5, [e_1,e_3] = e_6, [e_2,e_3]=-e_5, [e_2,e_4] = e_6$ and the curve of non-equivalent symplectic structures $\omega_{1}(\lambda)=\lambda e^{\ast}_1 \wedge e^{\ast}_5 + e^{\ast}_2 \wedge e^{\ast}_6 + (\lambda +1)e^{\ast}_3 \wedge e^{\ast}_4$ (with $\lambda \in \RR \setminus \{-1,0\}$). Consider the change of basis given by
$$
g=\diag\left(1, 1,1, \lambda+1, \left(\begin{array}{cc}0&1\\ \lambda &0 \end{array}\right) \right)
$$
which defines a defines a symplectomorphism from $(\ngo_{12},\omega_{1}(\lambda))$ to $(\RR^{6},\mu_{\lambda},\omega_{cn})$ where
$$
\mu_{\lambda} := \left\{
\begin{array}{l}
 [{e_1},{e_2}]= \left( \lambda+1 \right) {e_4},
          [{e_1},{e_3}]={e_5},
          [{e_1},{e_4}]={\frac {\lambda}{\lambda+1}}{e_6},\\
          {[{e_2},{e_3}]}=-\lambda\,{e_6},
          [{e_2},{e_4}]={\frac {1}{\lambda+1}}{e_5}
\end{array} \right.
$$
As above, $(\RR^{6},\mu_{\lambda},\omega_{cn})$ is written in a $\omega_{cn}$-nice basis, and by Theorem \ref{nicegeneralizado}, one can show that $(\ngo_{12},\omega_{1}(\lambda))$ admits a minimal compatible metric. By solving $\mm_{\spg}(\exp(Y)\cdot \mu_{\lambda})= \mcc(\mathfrak{R}_{\omega_{cn}}(\mu_{\lambda}))$, we have a symplectomorphism given by $\exp(Y)$ from  $(\RR^{6},\mu_{\lambda},\omega_{cn})$ to  $(\RR^{6},\widetilde{\mu}_{\lambda},\omega_{cn})$ with
$$
\widetilde{\mu_{\lambda}} := \left\{
\begin{array}{l}
[{e_1},{e_2}]=\frac{\sqrt{2}}{4}{\frac { \left( \lambda+1 \right) \sqrt {{\lambda}^{2}+\lambda+1}}{{\lambda}^{2}+\lambda+1}}{e_4},
[{e_1},{e_3}]=\frac{\sqrt{2}}{4}{\frac {\sqrt {{\lambda}^{2}+\lambda+1}}{{\lambda}^{2}+\lambda+1}}{e_5},\\
{[{e_1},{e_4}]}=\frac{\sqrt {2}}{4} \sign \left( {\frac {\lambda}{\lambda+1}} \right) {e_6},
{[{e_2},{e_3}]}=-\frac{\sqrt{2}}{4}{\frac {\lambda\,\sqrt {{\lambda}^{2}+\lambda+1}}{{\lambda}^{2}+\lambda+1}}{e_6},\\
{[{e_2},{e_4}]}=\frac{\sqrt {2}}{4}{\sign \left( \lambda+1 \right) }{e_5}
\end{array} \right.
$$
and which is such that the canonical inner product defines a minimal compatible metric with $(\RR^{6},\widetilde{\mu}_{\lambda},\omega_{cn})$.

This example is interesting in the following sense. Let $\chP_{\lambda}$ the Chern-Ricci operator of $(\RR^{6},\widetilde{\mu}_{\lambda},\ip,J,\omega_{cn})$. We have
$$
\chP_{\lambda}=\frac{1 }{16}\,{\frac {\left( 1+ \sign \left( \lambda \right)  \right)  \left| \lambda+1 \right|\sqrt {{\lambda}^{2}+\lambda+1}}{{\lambda}^{2}+\lambda+1}} (E_{5,1}-E_{6,2}).
$$
It is easy to see that $P_{\lambda}$ is a Derivation of $(\RR^{6},\widetilde{\mu}_{\lambda})$ and moreover, $(\RR^{6},\widetilde{\mu}_{\lambda},\omega_{cn})$ is Chern-Ricci flat if and only if $\lambda$ is a negative number (with $\lambda \neq -1$).
\end{example}

\begin{theorem}
The classification of minimal metrics on $6$-dimensional symplectic three-step nilpotent Lie algebras is given in the Table \ref{minmetricas3pasos}.
In each case, such metric defines a soliton almost K\"{a}hler structure, where the Chern-Ricci operator is a derivation of the respective
nilpotent Lie algebra (see Table \ref{minmetricas3pasosCHP}).
\end{theorem}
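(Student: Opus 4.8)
The plan is to establish the theorem case by case, following the classification of $6$-dimensional symplectic three-step nilpotent Lie algebras in \cite{KHAKIMDJANOV1}. First I would run through Khakimdjanov's list and, for each three-step nilpotent Lie algebra $\ngo$ carrying a symplectic structure $\omega$, produce an explicit change of basis $g \in \Glr$ (of the block-diagonal, upper-triangular type used in the worked examples) transforming $(\ngo,\omega)$ into $(\RR^6,\mu,\omega_{cn})$ with $\mu$ written in a $\omega_{cn}$-nice basis; this amounts to solving $g\cdot\omega = \omega_{cn}$ together with the nice-basis condition $\mm_{\spg}(\exp(X)\cdot g\cdot\mu)\in\ag_{\omega_{cn}}$ for all $X\in\ag_{\omega_{cn}}$, exactly as in the examples above. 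Once a $\omega_{cn}$-nice basis is fixed, I would compute the weight set $\mathfrak{R}_{\omega_{cn}}(\mu)$ and its Gram matrix $\gramU^{\omega_{cn}}_{\mu}$, and verify that the linear system $\gramU^{\omega_{cn}}_{\mu}[x_i]=\lambda[1]$ admits a positive solution; by Theorem \ref{nicegeneralizado} this is precisely the existence of a minimal compatible metric, and it also pins down $\beta_{\mu}^{\omega_{cn}}$, hence the minimal metric itself via the equation $\mm_{\ggo_{\omega_{cn}}}(a\cdot\mu)=\beta_\mu^{\omega_{cn}}$. Recording the resulting normalized bracket $\widetilde{\mu}$ and the minimal metric for each algebra produces Table \ref{minmetricas3pasos}.

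Next I would turn to the soliton assertion. For each minimal representative $(\RR^6,\widetilde{\mu},\ip,J,\omega_{cn})$, I compute the Chern-Ricci operator $\chP$ using formula (\ref{CRform}): first find $\widehat{H}\in\ggo$ with $\chPform(X,Y)=\omega_{cn}(\widehat H,[X,Y])$, then $\chP=\ad_{\widehat H}+\ad_{\widehat H}^{\trans_\omega}$. The key claim to verify in each case is that $\chP$ is itself a derivation of $(\RR^6,\widetilde\mu)$, i.e. $\chP\in\Der(\widetilde\mu)$; this is a finite linear check against the bracket relations. Granting this, $\chP = 0\cdot\Id + \chP$ satisfies Condition (\ref{cond2}) with $c_1=0$, $D_1=\chP$, while minimality of the metric gives $\Ric^{ac}=c_2\Id + D_2$ with $D_2\in\Der(\widetilde\mu)$ (this is the defining property of minimal compatible metrics, via \cite[Theorem 4.3]{LAURET3}); Proposition \ref{sufsoliton} then yields that $(\ip,J,\omega_{cn})$ is a soliton almost K\"{a}hler structure with $c=c_2$ and $D=\chP+D_2$. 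Collecting the operators $\chP$ across all cases gives Table \ref{minmetricas3pasosCHP}.

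The main obstacle I anticipate is not conceptual but organizational and computational: the classification in \cite{KHAKIMDJANOV1} presents the symplectic three-step nilpotent Lie algebras as several discrete algebras together with one-parameter families $\omega_i(\lambda)$, and for each one must (i) find a workable symplectomorphism to a $\omega_{cn}$-nice basis — the nice-basis requirement is genuinely restrictive and sometimes forces case splitting on the parameter, as the $\ngo_{11}$ example shows where the structure is nice but admits no minimal metric unless $\lambda=0$, and the $\ngo_{13}$ example where $\lambda=\tfrac14$ is exceptional — and (ii) carry out the Gram-matrix positivity analysis and the Chern-Ricci computation uniformly in $\lambda$. Edge cases where the generic $\omega_{cn}$-nice basis degenerates (vanishing denominators, $\lambda$ in a small exceptional set) need separate treatment, handled by an alternate change of basis as in the $\ngo_{13}$ discussion. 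Once all cases are dispatched, the two tables assemble the full statement; since every step reduces to Theorem \ref{nicegeneralizado} and Proposition \ref{sufsoliton} plus explicit linear algebra over $\RR^6$, correctness is a matter of careful bookkeeping, and the representative examples already treated illustrate every mechanism that occurs.
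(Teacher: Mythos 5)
Your proposal is correct and follows essentially the same route as the paper: a case-by-case reduction of Khakimdjanov--Goze--Medina's list to $\omega_{cn}$-nice bases, application of Theorem \ref{nicegeneralizado} via the Gram matrix to decide existence and to solve $\mm_{\spg}(a\cdot\mu)=\beta_{\mu}^{\omega_{cn}}$ for the minimal metric, followed by the computation of $\chP$, verification that it is a derivation, and an appeal to Proposition \ref{sufsoliton} with $c_1=0$, $D_1=\chP$ and $\Ric^{ac}=c_2\Id+D_2$. The paper itself only writes out a few representative cases and asserts the rest are analogous, which is exactly the bookkeeping you describe.
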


In the Table \ref{minmetricas3pasos}, each Lie algebra defines a symplectic three-step Lie algebra given by $(\RR^6,\widetilde{\mu},\omega_{cn})$
and it is such that the canonical inner product on $\RR^6$ defines a minimal metric of scalar curvature equal to $-\frac{1}{4}$. In the column $||\beta||^2$ we give the norm squared of the \textit{stratum} associated to the minimal metric and in Derivation column, we give the derivation of $(\RR^6,\widetilde{\mu})$ such that
$$
\mm_{\mathfrak{sp}_{6}(\RR)}(\widetilde{\mu})=-||\beta||^2\Id + \mbox{Derivation}.
$$
In the last column, we give the dimension of automorphism group of the symplectic three-step Lie algebra $(\RR^6,\widetilde{\mu},\omega_{cn})$.
A line means that such symplectic nilpotent Lie algebra does not admit a minimal metric.
\begin{table}[h!]
\begin{center}
\begin{tabular}{|c|l|c|c|c|}
\hline
\multicolumn{1}{ |c| }{\centering Not. } &
\multicolumn{1}{  c|    }{\centering Critical point } &
\multicolumn{1}{  c|  } {\centering {Derivation}} &
\multicolumn{1}{  c|  }{\centering $||\beta||^2$ } &
\multicolumn{1}{  m{0.5cm }|  }{\centering $\dim$\\$\Aut$}
\tabularnewline
\hline
\hline
\multirow{2}{*}{10.1.}&
$[{e_1},{e_2}]=-\frac{\sqrt{2}}{4}{e_4},[{e_1},{e_3}]=-\frac{\sqrt{2}}{4}{e_5}, [{e_1},{e_4}]=\frac{\sqrt{2}}{4}{e_6},$ &
\multirow{2}{*}{$\frac{1}{2}\diag(1,1,2,2,3,3)$}&\multirow{2}{*}{ $1$ }& \multirow{2}{*}{5}\\
&$[{e_2},{e_4}]=-\frac{\sqrt{2}}{4}{e_5} $&&&\\
\hline
\multirow{2}{*}{10.2}&
$[{e_1},{e_2}]=-\frac{\sqrt{2}}{4}{e_4},[{e_1},{e_4}]=\frac{\sqrt{2}}{4}{e_6},[{e_2},{e_3}]=\frac{\sqrt{2}}{4}{e_6},$&
\multirow{2}{*}{$\frac{1}{2}\diag(1,1,2,2,3,3)$}&\multirow{2}{*}{$ 1 $}&\multirow{2}{*}{$ 5$}\\
 &$[{e_2},{e_4}]=-\frac{\sqrt{2}}{4}{e_5}$ & & &\\
\hline
11.1 & \multirow{2}{*}{-} & \multirow{2}{*}{-} & \multirow{2}{*}{-} & \multirow{2}{*}{$5$}\\
$\lambda \neq 0$&&&&\\
\hline
11.1&
$[{e_1},{e_2}]=-\frac{\sqrt {6}}{12}{e_3}+\frac{\sqrt {2}}{4}{e_4},[{e_1},{e_3}]=\frac{\sqrt {2}}{4}{e_5},$&
\multirow{2}{*}{$\frac{1}{2}\diag(1,1,2,2,3,3)$}&$ \multirow{2}{*}{1}$& \multirow{2}{*}{$6$}\\
$\lambda = 0$&$[{e_1},{e_4}]=-\frac{\sqrt {6}}{12}{e_5},[{e_2},{e_4}]=-\frac{\sqrt {6}}{6}{e_6}$&&&\\
\hline
11.2  & \multirow{2}{*}{ - }& \multirow{2}{*}{ - }& \multirow{2}{*}{ -} & \multirow{2}{*}{$5$}\\
$\lambda \neq 0$&&&&\\
\hline
11.2 &
$ [{e_1},{e_2}]=\frac{\sqrt {6}}{12}{e_3}+\frac{\sqrt {2}}{4}{e_4},[{e_1},{e_3}]=\frac{\sqrt {2}}{4}{e_5},$&
\multirow{2}{*}{$\frac{1}{2}\diag(1,1,2,2,3,3)$}& \multirow{2}{*}{$ 1 $}& \multirow{2}{*}{$6$}\\
$\lambda = 0$&$[{e_1},{e_4}]=\frac{\sqrt {6}}{12}{e_5},[{e_2},{e_4}]=\frac{\sqrt {6}}{6}{e_6}$&&&\\
\hline
\multirow{2}{*}{12.1}
&$[{e_1},{e_2}]=f_{1}(\lambda) \left( \lambda+1 \right) {e_4},[{e_1},{e_3}]= f_{1}(\lambda){e_5},$
&\multirow{3}{*}{$\frac{1}{2}\diag(1,1,2,2,3,3)$}&\multirow{3}{*}{$1$}&\multirow{3}{*}{$5$}\\
&$[{e_2},{e_3}]=f_{1}(\lambda)(-\lambda) {e_6},$&&&\\
&$[{e_1},{e_4}]=\frac{\sqrt {2}}{4} \sign \left( {\frac {\lambda}{\lambda+1}} \right) {e_6}, [{e_2},{e_4}]=\frac{\sqrt {2}}{4}{\sign \left( \lambda+1 \right) }{e_5}.$&&&\\
\hline
\multirow{2}{*}{13.1}&
$[{e_1},{e_2}]=f_{2}(\lambda)\left( 1-\lambda \right) {e_4},[{e_1},{e_3}]= f_{2}(\lambda){e_5},$&
\multirow{2}{*}{$\frac{1}{6}\diag(5,3,6,8,11,9)$}&\multirow{2}{*}{$\frac{7}{6}$}&\multirow{2}{*}{$7$}\\
&$[{e_2},{e_3}]= f_{2}(\lambda)(\lambda){e_6}, [{e_2},{e_4}]=\frac{\sqrt {6}}{6}{\sign} \left( \lambda-1 \right) {e_5}.$&&&\\
\hline
13.2 &
\multirow{2}{*}{see Example \ref{ejemplofeo}} &
\multirow{2}{*}{$\frac{5}{11}\diag(1,2,2,3,3,4)$ } & \multirow{2}{*}{$\frac{25}{22}$}&\multirow{2}{*}{ $6$}\\
$\lambda \neq \frac{1}{4}$&&&&\\
\hline
13.2 &
$[{e_1},{e_2}]=-{\frac {\sqrt {165}}{66}}{e_4}+\frac{\sqrt {11}}{11}{e_5},[{e_1},{e_3}]={\frac {\sqrt {165}}{66}}{e_5},$&
\multirow{2}{*}{$\frac{5}{11}\diag(1,2,2,3,3,4)$} & \multirow{2}{*}{$\frac{25}{22}$} & \multirow{2}{*}{$ 6$}\\
$\lambda  =   \frac{1}{4}$&$[{e_1},{e_5}]=\frac{\sqrt {22}}{11}{e_6},[{e_2},{e_3}]=\frac{\sqrt {165}}{33}{e_6}.$&&&\\
\hline
13.3&-&-&-&$7$\\
\hline
14.1&$[{e_1},{e_2}]=\frac{\sqrt {55}}{22}{e_5},[{e_1},{e_3}]={\frac {3\sqrt {11}}{22}}{e_4},[{e_1},{e_4}]=\frac{\sqrt {22}}{11}{e_6} $&$\frac{5}{11}\diag(1,2,2,3,3,4)$&$ \frac{25}{22}$&$ 6$\\
\hline
14.2&$[{e_1},{e_2}]=\frac{\sqrt {55}}{22}{e_5},[{e_1},{e_3}]=-{\frac {3\sqrt {11}}{22}}{e_4},[{e_1},{e_4}]=  \frac{\sqrt {22}}{11}{e_6}$&$\frac{5}{11}\diag(1,2,2,3,3,4)$&$\frac{25}{22}$&$6$\\
\hline
14.3&$[{e_1},{e_2}]= \frac{\sqrt{6}}{6}{e_4},[{e_1},{e_3}]=\frac{\sqrt{6}}{6}{e_5},[{e_1},{e_4}]=\frac{\sqrt{6}}{6}{e_6}$&$\frac{1}{6}\diag(3,5,6,8,9,11)$&$\frac{7}{6}$&$7$\\
\hline
15.1&-&-&-&$5$\\
\hline
15.2&-&-&-&$5$\\
\hline
15.3&$ [{e_1},{e_2}]=\frac{\sqrt {21}}{14}{e_5},[{e_1},{e_5}]=\frac{\sqrt {42}}{14}{e_6},[{e_2},{e_3}]=\frac{\sqrt {35}}{14}{e_4}$&$\frac{5}{28}\diag(2,4,3,7,6,8)$&$ \frac{25}{28}$&$4$\\
\hline
21.1&
$[{e_1},{e_2}]=-\frac{\sqrt {66}}{44}{e_4} - \frac{\sqrt {22}}{44}{e_5},[{e_1},{e_3}]={\frac {3\sqrt {22}}{44}}{e_4} + \frac{\sqrt {66}}{44}{e_5},$&
\multirow{2}{*}{$\frac{5}{11}\diag(1,2,2,3,3,4)$}& \multirow{2}{*}{$ \frac{25}{22} $}&\multirow{2}{*}{$ 6$}\\
&$[{e_1},{e_4}]=-\frac{\sqrt {22}}{11}{e_6},[{e_2},{e_3}]=\frac{\sqrt {66}}{22}{e_6}$.&&&\\
\hline
21.2&$ [{e_1},{e_2}]= \frac{\sqrt {6}}{6}{e_3},[{e_1},{e_3}]= \frac{\sqrt {6}}{6}{e_6}, [{e_2},{e_4}]= \frac{\sqrt {6}}{6}{e_6}$&$ \frac{1}{6}\diag(3,5,8,6,9,11)$&$\frac{7}{6}$&$7$\\
\hline
21.3&$[{e_1},{e_2}]=\frac{\sqrt {6}}{6}{e_3},[{e_1},{e_3}]=-\frac{\sqrt {6}}{6}{e_6},[{e_2},{e_4}]=\frac{\sqrt {6}}{6}{e_6}$&$\frac{1}{6}\diag(3,5,8,6,9,11)$&$\frac{7}{6}$&$7$\\
\hline
22.1&$[{e_1},{e_2}]=\frac{1}{2}\,{e_5},[{e_1},{e_5}]=\frac{1}{2}\,{e_6}$&$\frac{1}{4}\diag(2,4,5,5,6,8)$&$\frac{5}{4}$&$8$\\
\hline
\end{tabular}
\end{center}
\caption{Classification of minimal compatible metrics on symplectic three-step Lie algebras of dimension $6$. \newline Here,
$f_{1}(\lambda)=\frac{\sqrt{2}}{4}{\frac { \sqrt {{\lambda}^{2}+\lambda+1}}{{\lambda}^{2}+\lambda+1}}$ and $f_{2}(\lambda)  = \frac{\sqrt {6}}{6} \frac{\sqrt { {\lambda}^{2}-\lambda+1 }}{ {\lambda}^{2}-\lambda+1 } $}\label{minmetricas3pasos}
\end{table}

\begin{table}[h!]
\begin{center}
\begin{tabular}{|c|l|}
\hline
\multicolumn{1}{ |c| }{\centering Not. } &
\multicolumn{1}{  c|    }{\centering Chern-Ricci operator}
\tabularnewline
\hline
\hline
10.1& Chern-Ricci flat\\
\hline
10.2& Chern-Ricci flat\\
\hline
11.1 $(\lambda=0)$& Chern-Ricci flat\\
\hline
11.2 $(\lambda=0)$& Chern-Ricci flat\\
\hline
12.1& $\chP_{\lambda}=\frac{\sqrt{2}}{8}\,{\left( 1+ \sign \left( \lambda \right)  \right)  \left| \lambda+1 \right|f_{1}(\lambda)} (E_{5,1}-E_{6,2})$\\
\hline
13.1& $\chP_{\lambda}=\frac{\sqrt{6}}{12}|\lambda-1|f_{2}(\lambda)(E_{5,1}-E_{6,2})$\\
\hline
13.2& {see Example \ref{ejemplofeo}} \\
\hline
13.2 $(\lambda=\frac{1}{4})$& $\chP = \frac{\sqrt{30}}{132}(E_{4,1}-E_{6,3})+\frac{\sqrt{2}}{22}(E_{5,1}-E_{6,2})$\\
\hline
14.1& $\chP = \frac{3\sqrt{2}}{44}(E_{4,1}-E_{6,3})$\\
\hline
14.2& $\chP = -\frac{3\sqrt{2}}{44}(E_{4,1}-E_{6,3})$\\
\hline
14.3& $\chP =  \frac{1}{12}(E_{5,1}-E_{6,2})$\\
\hline
15.3& $\chP = \frac{3\sqrt{2}}{56} ( E_{5,1}-E_{6,2})$\\
\hline
21.1& $ \chP =  -\frac{3}{44}(E_{4,1}-E_{6,3}) + \frac{\sqrt{3}}{44} (E_{5,1} - E_{6,2})$\\
\hline
21.2& $\chP =\frac{1}{12}(E_{5,1}-E_{6,2})$\\
\hline
21.3& $ \chP = -\frac{1}{12}(E_{5,1}-E_{6,2})$\\
\hline
22.1& $\chP = \frac{1}{8}(E_{5,1}-E_{6,2})$\\
\hline
\end{tabular}
\end{center}
\caption{Chern-Ricci operator of minimal compatible metrics on symplectic three-step Lie algebras of dimension $6$. \newline Here,
$f_{1}(\lambda)=\frac{\sqrt{2}}{4}{\frac { \sqrt {{\lambda}^{2}+\lambda+1}}{{\lambda}^{2}+\lambda+1}}$ and $f_{2}(\lambda)  = \frac{\sqrt {6}}{6} \frac{\sqrt { {\lambda}^{2}-\lambda+1 }}{ {\lambda}^{2}-\lambda+1 } $}\label{minmetricas3pasosCHP}
\end{table}

\subsection{Symplectic two-step nilpotent Lie algebras}
Here, we give the classification of minimal compatible metrics on two-step nilpotent Lie algebras,
which determines immediately a soliton almost K\"{a}hler structure, because the Chern-Ricci operator
is always zero.

\begin{example}
Consider the free $2$-step nilpotent Lie algebra of $\rank 3$; $\ngo_{18}:=[e_1,e_2] =e_4, [e_1,e_3] = e_5, [e_2,e_3] = e_6$. By \cite[Theorem 5]{KHAKIMDJANOV1}, $\ngo_{18}$ carries two curves of non-equivalent symplectic structures, namely $\omega_{1}(s)$ and $\omega_{2}(t)$ (with $s\in \RR\setminus\{0,1\}$ and $t \in \RR\setminus\{0\}$), and a isolated symplectic structure $\omega_{3}$. In this example we want to prove that every pair $(\ngo_{18},\omega_{i})$ admits a minimal compatible metric.

We take the case of the first curve
$$
\omega_{1}(s)= e_{1}^{\ast}\wedge e_6^{\ast} +se_2^{\ast}\wedge e_5^{\ast} +(s -1)e_3^{\ast}\wedge e_4^{\ast},\,\mbox{with }s\in\RR\setminus\{0,1\}
$$
and let $g=\diag(1,1,1,s-1,s,1)$. The change of basis given by $g$ defines a symplectomorphism from $(\ngo_{18},\omega_{1}(s))$ to $(\RR^6,\mu_{s},\omega_{cn})$ with
\begin{eqnarray*}
   \mu_{s} &=&  [e_1, e_2] = (s-1)e_4, [e_1, e_3] =s e_5, [e_2, e_3] =  e_6
\end{eqnarray*}
It is obvious that $(\RR^6,\mu_{s},\omega_{cn})$ is written in a $\omega_{cn}$-nice basis, because
$$
\mathfrak{R}_{\omega_{cn}}(\mu_{s})=\{\frac{1}{2}\diag(-1,-1,-1,1,1,1)\}
$$
and that from this $(\ngo_{18},\omega_{1}(s))$ admits a minimal compatible metric which can be found by solving the equation
$$
\mm_{\spg}(a\cdot\mu_{s})=\frac{1}{2}\diag(-1,-1,-1,1,1,1)
$$
for $a\in\exp(\ag_{\omega_{cn}})$.

Let $a=\exp(X)$ with
$$
X = \frac{1}{2}\diag(\ln  ( 4\,{s}^{2}-4\,s+4),0,0,0,0,-\ln  ( 4\,{s}^{2}-4\,s+4 )).
$$
The change of basis given by $a$ defines the curve
$$
\begin{array}{rcl}
\widetilde{\mu_{s}}  &:=& [{e_1},{e_2}]=\frac{1}{2}\, \left( s-1 \right) \sqrt { \left( {s}^{2}-s+1 \right) ^{-1}}{ e_4},
                      [{e_1},{e_3}]=\frac{1}{2}\,s\,\sqrt { \left( {s}^{2}-s+1 \right) ^{-1}}{e_5},\\
                 & &  [{e_2},{e_3}]=\frac{1}{2}\,\sqrt { \left( {s}^{2}-s+1 \right) ^{-1}}{e_6}
\end{array}
$$
and it is a solution of the above equation, since
$$
\mm_{\glg}(\widetilde{\mu_{s}}) = \frac{1}{2({s}^{2}-s+1)}\diag(-2\,{s}^{2}+2\,s-1,-{s}^{2}+2\,s-2,
-({s}^{2}+1), \left( s-1 \right) ^{2},{s}^{2},1),
$$
and thus
$$
\begin{array}{rcl}
\mm_{\spg}(\widetilde{\mu_{s}}) & = & \frac{1}{2}(\mm_{\glg}(\widetilde{\mu_{s}})+J.\mm_{\glg}(\widetilde{\mu_{s}}).J)\\
                            &&\\
                            & = & \frac{1}{2}\diag(-1,-1,-1,1,1,1)\\
                            &&\\
                            & = & -\frac{3}{2}\Id + \underbrace{\diag(1,1,1,2,2,2)}_{\mbox{Derivation}};\
\end{array}
$$
The canonical inner product of $\RR^6$ defines a minimal compatible metric on $(\RR^6,\widetilde{\mu}_{s},\omega_{cn})$.

Now, consider the case of
$$
\omega_{2}(t)= e_1^{\ast}\wedge e_5^{\ast} + t e_1^{\ast}\wedge e_6^{\ast} -t e_2^{\ast}\wedge e_5^{\ast} + e_2^{\ast}\wedge e_6^{\ast} - 2te_3^{\ast}\wedge e_4^{\ast}.
$$
From Example \ref{ejemploLibre}, we have $(\ngo_{18},\omega_{2})$ is equivalent to $(\RR^6,\mu_{t},\omega_{cn})$ with
\begin{eqnarray*}
 \mu_{t} &=& \left\{[{e_1},{e_2}]=-2\,t\,{e_4},[{e_1},{e_3}]=-
t\,{e_5}+{e_6},[{e_2},{e_3}]={e_5}+t\,{e_6}\right..
\end{eqnarray*}
It is easy to see that $(\RR^6,\mu_{t},\omega_{cn})$ is written in a $\omega_{cn}$-nice basis and that the set $\mathfrak{R}_{\omega_{cn}}(\mu_{t})$ is given by
$$
\begin{array}{l}
\mathfrak{R}_{\omega_{cn}}(\mu)=\{\diag(-1,0,-\frac{1}{2},\frac{1}{2},0,1),\diag(0,-1,-\frac{1}{2},\frac{1}{2},1,0),\\
\hspace{2cm}\frac{1}{2}\diag(-1,-1,-1,1,1,1)\}
\end{array}
$$

Like above, the Theorem \ref{nicegeneralizado} states that $(\ngo_{18},\omega_2(t))$ admits a minimal compatible metric and proceeding in a way similar, we find that $\widetilde{\mu_{t}}:=a\cdot\mu$ with
$$
a=\diag(1,1,2\sqrt{3t^2+1},\frac{1}{2\sqrt{3t^2+1}},1,1)
$$
is such that the canonical inner product of $\RR^6$ defines a minimal compatible metric on $(\RR^6,\widetilde{\mu_{t}},\omega_{cn})$ for any $t\in \RR\setminus\{0\}$.

In the last case,
$$
\omega_{3}:=e_3^{\ast} \wedge e_5^{\ast} - e_1^{\ast}\wedge e_6^{\ast} + e_2^{\ast} \wedge e_5^{\ast} + 2e_3^{\ast} \wedge e_4^{\ast}
$$
we can now proceed analogously like above. We leave it to the reader to verify that the following change of basis give a minimal compatible metric with $(\ngo_{18},\omega_{3})$:
\begin{eqnarray*}
g&:=&\diag(-1,\left(\begin{array}{cc}1&\frac{5}{6}\\1&-\frac{1}{6} \end{array}\right),\left(\begin{array}{cc}-2&-\frac{1}{6}\\2&\frac{7}{6} \end{array}\right),1),\\
a&:=&\diag(2\sqrt{3},1,1,1,1, \frac{\sqrt{3}}{6}).
\end{eqnarray*}
\end{example}

Proceeding in an entirely analogous way, we can study the remainder symplectic two-step Lie algebras given in \cite[Theorem 5. 24]{KHAKIMDJANOV1} and to obtain

\begin{theorem}\label{symtwo}
All symplectic two-step Lie algebras of dimension $6$ admit a minimal compatible metric, and, in consequence,
these admit a soliton almost K\"{a}hler structure.
\end{theorem}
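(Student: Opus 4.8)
The plan is to proceed by a finite case analysis over Khakimdjanov's classification \cite[Theorem 5.24]{KHAKIMDJANOV1} of the $6$-dimensional symplectic two-step nilpotent Lie algebras, reducing each case to an application of Theorem \ref{nicegeneralizado} together with Proposition \ref{sufsoliton}. For each pair $(\ngo,\omega)$ in the list I would first produce an explicit $g\in\Glr$ realizing a symplectomorphism from $(\ngo,\omega)$ to $(\RR^6,\mu,\omega_{cn})$, where $\omega_{cn}=e_1^{\ast}\wedge e_6^{\ast}+e_2^{\ast}\wedge e_5^{\ast}+e_3^{\ast}\wedge e_4^{\ast}$, and --- crucially --- such that the canonical basis $\{e_1,\ldots,e_6\}$ is a $\omega_{cn}$-nice basis of $(\RR^6,\mu)$. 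As in Example \ref{ejemploLibre} and the worked cases above, $g$ can be taken of block-diagonal or block-triangular form compatible with the symplectic pairing, the remaining free parameters being fixed so that $\mm_{\spg}(\exp(X)\cdot g\cdot\mu)\in\ag_{\omega_{cn}}$ for all $X\in\ag_{\omega_{cn}}$; by the remark following Definition \ref{gamanice} this is precisely the nice-basis condition.

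Next, with $(\RR^6,\mu,\omega_{cn})$ in a $\omega_{cn}$-nice basis, I would read off the weight set $\mathfrak{R}_{\omega_{cn}}(\mu)$ from the non-zero structure constants and form the Gram matrix $\gramU^{\omega_{cn}}_{\mu}$. By Theorem \ref{nicegeneralizado} a minimal compatible metric exists if and only if $\gramU^{\omega_{cn}}_{\mu}[x_i]=\lambda[1]$ admits a positive solution for some $\lambda\in\RR$; for each algebra in the list this is verified directly. The Gram matrices here are small with rational entries, so solving the linear system and exhibiting a positive solution (for instance $X'=\tfrac12[1,1,1]^{\trans}$ in the free nilpotent case $\ngo_{18}$) is routine, and the minimal metric itself is then recovered by solving $\mm_{\spg}(a\cdot\mu)=\beta_{\mu}^{\omega_{cn}}$ for $a\in\exp(\ag_{\omega_{cn}})$.

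For the soliton conclusion, since $\ngo$ is two-step nilpotent we have $\ad_{[\cdot,\cdot]}=0$, so by \eqref{CRform} the Chern-Ricci form vanishes and $\chP=0=0\cdot\Id+0$. On the other hand the minimal compatible metric satisfies $\Ric^{ac}=c\Id+D$ with $c\in\RR$, $D\in\Der(\ngo)$ by \cite[Theorem 4.3]{LAURET3}. Invoking Proposition \ref{sufsoliton} in the form \eqref{cond2}, with $c_1=0$, $D_1=0$, $c_2=c$, $D_2=D$, yields that $(\ip,J,\omega_{cn})$ defines a soliton almost K\"ahler structure. This part is uniform across the whole list.

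The main obstacle is the first step: guaranteeing that every symplectic form appearing in the classification can be brought to $\omega_{cn}$ by a change of basis that \emph{simultaneously} makes the basis $\omega_{cn}$-nice. A Darboux-type normalization of $\omega$ is always available, but it does not by itself produce a nice basis; one must exploit the residual freedom (the action of the stabilizer of $\omega_{cn}$ on $\mu$) together with the specific shape of the two-step bracket in order to annihilate the off-diagonal entries of $\mm_{\spg}$. For a few of the algebras this forces a genuinely non-diagonal $g$ --- as in the $\omega_3$ case of $\ngo_{18}$ above --- and checking the nice condition there is the delicate point. Once past it, the Gram-matrix computation and the soliton argument are mechanical.
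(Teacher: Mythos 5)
Your proposal follows essentially the same route as the paper: a case-by-case reduction of each symplectic two-step algebra in the Khakimdjanov--Goze--Medina list to a $\omega_{cn}$-nice basis, application of Theorem \ref{nicegeneralizado} via the Gram matrix, and the soliton conclusion from Chern--Ricci flatness of two-step algebras combined with $\Ric^{ac}=c\Id+D$ and Proposition \ref{sufsoliton}. You also correctly identify the genuinely delicate step --- arranging the symplectomorphism to $\omega_{cn}$ so that the resulting basis is simultaneously nice --- which is exactly where the paper's case-by-case work (e.g.\ the $\omega_3$ case of $\ngo_{18}$) is concentrated.
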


\begin{remark}
We must say that we have found several mistakes in the classification given in \cite{KHAKIMDJANOV1}. For instance, 16.(b) does not define a symplectic structure, 9. is not a curve of non-equivalent symplectic structures. Some errors have already been corrected by personal communication with authors; as the symplectic structure given in 23.(c).
\end{remark}

In the Table \ref{minmetricas}, each Lie algebra defines a symplectic two-step Lie algebra given by $(\RR^6,\widetilde{\mu},\omega_{cn})$ and it is such that the canonical inner product on $\RR^6$ defines a minimal metric of scalar curvature equal to $-\frac{1}{4}$. In the column $||\beta||^2$ we give the norm squared of the stratum associated to the minimal metric and in Derivation column, we give the derivation of $(\RR^6,\widetilde{\mu})$ such that
$$
\mm_{\mathfrak{sp}_{6}(\RR)}(\widetilde{\mu})=-||\beta||^2\Id + \mbox{Derivation}.
$$
In the last column, We give the dimension of automorphism group of the symplectic two-step Lie algebra $(\RR^6,\widetilde{\mu},\omega_{cn})$.

\begin{table}[h]
\begin{center}
\begin{tabular}{|c|l|c|c|c|}
\hline
\multicolumn{1}{ |c| }{\centering Not. } &
\multicolumn{1}{  c|    }{\centering Critical point } &
\multicolumn{1}{  c|  } {\centering {Derivation}} &
\multicolumn{1}{  c|  }{\centering $||\beta||^2$ } &
\multicolumn{1}{  m{0.5cm }|  }{\centering $\dim$\\$\Aut$}
\tabularnewline
\hline
\hline
\multirow{2}{*}{16.1.}& $[{e_1},{e_2}]=\frac{\sqrt {2}}{4}{e_3}$,
        $[{e_1},{e_5}]=\frac{\sqrt {2}}{4}{e_6}$,
                      &\multirow{2}{*}{$\frac{1}{2}\diag(1,2,3,1,2,3)$}&\multirow{2}{*}{1}&\multirow{2}{*}{6}\\
&$[{e_2},{e_4}]=\frac{\sqrt {2}}{4}{e_6}$,                      $[{e_4},{e_5}]=\frac{\sqrt {2}}{4}{e_3}$&&&\\
\hline
\multirow{2}{*}{17}&$[{e_1},{e_3}]=\frac{\sqrt {6}}{6}{e_5}$,
                     $[{e_1},{e_4}]=\frac{\sqrt{6}}{6}{e_6}$,&\multirow{2}{*}{$\frac{1}{6}\diag(3,5,6,8,9,11)$}&\multirow{2}{*}{$\frac{7}{6}$}&\multirow{2}{*}{7}\\
&$[{e_2},{e_3}]=\frac{\sqrt {6}}{6}{e_6}$.&&& \\
\hline
\multirow{3}{*}{18.1.}&$[{e_1},{e_2}]= f_{1}(s)(( s-1 )\,{e_4})$, &\multirow{3}{*}{$\diag(1,1,1,2,2,2)$}&\multirow{2}{*}{$\frac{3}{2}$}&\multirow{2}{*}{$ 8$}\\
& $[{e_1},{e_3}]=f_{1}(s)(s\,{e_5})$,&&&\\
& $[{e_2},{e_3}]=f_{1}(s){e_6}$.&&&\\
\hline
\multirow{3}{*}{18.2.}&$[{ e_1},{e_2}]=f_{2}(t)(-2t\,{e_4}),$&\multirow{3}{*}{$\diag(1,1,1,2,2,2)$}&\multirow{3}{*}{$\frac{3}{2}$}&\multirow{3}{*}{$8$}\\
&$[{e_1},{e_3}]=f_{2}(t)(-t\,{e_5}+{e_6}),$&&&\\
&$[{e_2},{e_3}]=f_{2}(t)({e_5}+t\,{e_6})$&&&\\
\hline
\multirow{3}{*}{18.3.}&$[{e_1},{e_2}]=\frac{\sqrt {3}}{12}{e_4} - \frac{\sqrt {3}}{4}{e_5}$,&\multirow{3}{*}{$\diag(1,1,1,2,2,2)$}&\multirow{3}{*}{$\frac{3}{2}$}&\multirow{3}{*}{10}\\
&$[{e_1},{e_3}]=\frac{\sqrt {3}}{4}{e_4}-\frac{\sqrt {3}}{12}{e_5}$&&&\\
&$[{e_2},{e_3}]=-\frac{\sqrt {3}}{6}{e_6}$.&&&\\
\hline
23.1.&$[e_1, e_2] = \frac{1}{2}e_5$, $[e_1, e_3] = \frac{1}{2}e_6$&$ \frac{1}{4}\diag(4,5,6,8,9,10)$&$\frac{7}{4}$&$9$\\
\hline
23.2.&$[{e_1},{e_2}]=-\frac{1}{2}\,{e_4}$, $[{e_2},{e_3}]=\frac{1}{2}{e_6}$&$\diag(1,1,1,2,2,2)$&$\frac{3}{2}$&$8$\\
\hline
23.3.&$[e_1, e_2] =\frac{1}{2}e_5$, $[e_1, e_3] = -\frac{1}{2}e_4$&$\diag(1,1,1,2,2,2)$&$\frac{3}{2}$&$8$\\
\hline
24.1.&$ [e_1, e_4] = \frac{1}{2}e_6$, $[e_2, e_3] = \frac{1}{2}e_5$&$\frac{1}{2}\diag(1,1,2,2,3,3)$&$ 1$&$6$\\
\hline
24.2.&$[e_1, e_4] = \frac{1}{2}e_6$, $[e2, e3] = -\frac{1}{2}e_5$&$\frac{1}{2}\diag(1,1,2,2,3,3)$&$ 1$&$6$\\
\hline
25.&$ [e_1,e_2]=\frac{\sqrt{2}}{2}e_6$&$\frac{1}{2}\diag(3,4,5,5,6,7)$&$\frac{5}{2}$&$12$\\
\hline
\end{tabular}
\end{center}
\caption{Classification of minimal compatible metrics on symplectic two-step Lie algebras of dimension $6$. \newline Here,
$f_{1}(s)=\frac{1}{2}\sqrt {( {s}^{2}-s+1 )^{-1}}$ and $f_{2}(t)=\frac{1}{2}\sqrt { (3\,{t}^{2}+1 )^{-1}}$}\label{minmetricas}
\end{table}

\end{document}